\newcounter{num}[section]
\newenvironment{theorem}
{\refstepcounter{num}%
\bigskip\noindent\nopagebreak[4]{\bf Theorem~\arabic{section}.\arabic{num}. }\it}
\newenvironment{corollary}
{\refstepcounter{num}%
\bigskip\noindent\nopagebreak[4]{\bf Corollary~\arabic{section}.\arabic{num}. }\it}
\newenvironment{lemma}
{\refstepcounter{num}%
\bigskip\noindent\nopagebreak[4]{\bf Lemma~\arabic{section}.\arabic{num}. }\it}
\newcommand{\LL}{{\mathcal{L}}}
\newcommand{\Ss}{{\mathbf{S}}}
\newcommand{\V}{{\mathrm{V}}}
\newcommand{\pr}{{\prime}}
\newcommand{\M}{{\mathcal{M}}}
\renewcommand{\P}{{\mathbf{P}}}
\renewcommand{\c}{{\mathbf{c}}}
\newcommand{\one}{{\mathbf{1}}}
\newcommand{\1}{{^{-1}}}
\begin{document}

\author{Artem N. Shevlyakov \footnote{The author was supported by RFII grant 14-01-00068}}
\title{On disjunctions of algebraic sets in completely simple semigroups}

\maketitle

\abstract{A semigroup $S$ is called an equational domain if any finite union of algebraic sets over $S$ is algebraic. We give some necessary and sufficient conditions for a completely simple semigroup to be an equational domain.}

\section{Introduction}

G. Baumslag, A. Miasnikov and V.~Remeslennikov~\cite{AG_over_groupsI,AG_over_groupsII} laid some basics of algebraic geometry over groups. Recall that an equation over a group $G$ is an equality $w(X)=1$, where $w(X)$ is an element of the free product $G\ast F(X)$. An algebraic set over $G$ is the set of all solutions of a system of equations over $G$. 

Finite unions of algebraic sets are not necessarily algebraic over $G$. However, there exist groups (which were completely described in the paper written by E.~Daniyarova, A.~Miasnikov and V.~Remeslennikov~\cite{uniTh_IV}) where any finite union of algebraic sets is algebraic. Following~\cite{uniTh_IV}, the groups with such property are called equational domains.  

One can similarly pose the problem of existence of equational domains in semigroups. To solve this problem it is natural to search equational domains in the varieties of semigroups which are close to groups. One such ``group-like'' class is that of completely simple semigroups, since any completely simple semigroup is a disjoint union of copies of a group. 

Any completely simple semigroup admits operation of the inversion ${}^{-1}$, hence we shall consider completely simple semigroups as algebraic structures in the language $\LL=\{\cdot,{}^{-1}\}$. The use of the language $\LL$ instead of $\{\cdot\}$ is quite convenient, since the class of completely simple semigroups forms a variety in the language $\LL$.

In the current paper we prove the necessary and sufficient conditions for a completely simple semigroup $S$ to be an equational domain (Theorem~\ref{th:criterion}). From the obtained criterion it follows that  free completely simple semigroups of rank $n\geq 2$ and any free product of arbitrary completely simple semigroups are equational domains (similar results hold in the variety of groups). Moreover, in Section~\ref{sec:generalization} we extend Theorem~\ref{th:criterion} to other semigroup languages.

\section{Notions of semigroup theory}

A subset $I\subseteq S$ is called a \textit{left (right) ideal} if $sa\in I$ (respectively, $as\in I$) for any $s\in S$, $a\in I$. If $I$ is a right and left ideal simultaneously, we say that $I$ is a {\it two-sided ideal} (or {\it an ideal} for shortness).  

A semigroup $S$ with a unique ideal $I=S$ is called \textit{simple}. A simple semigroup with minimal left and right ideal is \textit{completely simple (c.s.)}.

The next classic theorem completely describes c.s. semigroups.

\begin{theorem}
\label{th:rees}
For any c.s. semigroup $S$ there exists a group $G$ and sets $I,\Lambda$ such that  $S$ is isomorphic to the set of triples $(\lambda,g,i)$, $g\in G$, $\lambda\in\Lambda$, $i\in I$ with multiplication
\[
(\lambda,g,i)(\mu,h,j)=(\lambda,gp_{i\mu}h,j),
\]
where $p_{i\mu}\in G$ is an element of a matrix $\P$ such that
\begin{enumerate}
\item $\P$ consists of  $|I|$ rows and $|\Lambda|$ columns;
\item the matrix $\P$ is \textit{normalised}, i.e. 
\[p_{1i}=p_{\lambda 1}=1\in G\mbox{ for all }\lambda\in\Lambda,\;i\in I.\]
\end{enumerate}
\end{theorem}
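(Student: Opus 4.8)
The plan is to prove this classical Rees--Suschkewitsch structure theorem by translating the ideal-theoretic hypothesis into Green's relations and then producing an explicit coordinatisation. First I would pass from the definition of complete simplicity to the ``egg-box'' description of $S$. Since $S$ is simple it is a single $\mathcal{J}$-class; the presence of minimal left and right ideals makes $S$ stable, so that $\mathcal{J}=\mathcal{D}$ and $S$ forms a single $\mathcal{D}$-class. A minimal left (resp. right) ideal meets every $\mathcal{R}$-class (resp. $\mathcal{L}$-class) and carries an idempotent, and the intersection of a minimal right ideal with a minimal left ideal is a subgroup of $S$; equivalently, each $\mathcal{H}$-class $H_{\lambda i}=R_\lambda\cap L_i$ is a group. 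I index the $\mathcal{R}$-classes (rows) by $\Lambda$ and the $\mathcal{L}$-classes (columns) by $I$, fix an idempotent $e$ in a distinguished cell $H_{11}$, and set $G:=H_{11}$, the maximal subgroup with identity $e$.

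Next I would coordinatise $S$ by $\Lambda\times G\times I$. Since $S$ is simple and stable one checks $ab\mathrel{\mathcal{R}}a$ and $ab\mathrel{\mathcal{L}}b$ for all $a,b\in S$, whence Green's Lemma yields, for each $\lambda$ and $i$, bijections between $G$ and the cell $H_{\lambda i}$ obtained by translating within a fixed row and a fixed column. Concretely I choose connecting elements $a_\lambda\in H_{\lambda 1}$ and $b_i\in H_{1i}$, so that the map $(\lambda,g,i)\mapsto a_\lambda\,g\,b_i$ is a bijection of $\Lambda\times G\times I$ onto $S$. Computing a product gives
\[
(a_\lambda g b_i)(a_\mu h b_j)=a_\lambda\,\bigl(g\,(b_i a_\mu)\,h\bigr)\,b_j,
\]
and since $b_i\in R_1$ and $a_\mu\in L_1$, the sandwich factor $b_i a_\mu$ lands in $R_1\cap L_1=G$. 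Setting $p_{i\mu}:=b_i a_\mu\in G$ then produces exactly $(\lambda,g,i)(\mu,h,j)=(\lambda,g\,p_{i\mu}\,h,j)$, and $\P=(p_{i\mu})$ has its rows indexed by $I$ and its columns by $\Lambda$, i.e. $|I|$ rows and $|\Lambda|$ columns.

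It remains to normalise $\P$, and this is where the only genuine care is needed: the representatives $a_\lambda$ and $b_i$ must be chosen so that the distinguished row and column of $\P$ become trivial. Because left translation by $e$ restricts to a bijection $H_{\mu 1}\to G$ (Green's Lemma), I take $a_\mu$ to be the unique element of $H_{\mu 1}$ with $e a_\mu=e$, and dually $b_i$ the unique element of $H_{1i}$ with $b_i e=e$; consistency forces $a_1=b_1=e$. Then $p_{1\mu}=b_1 a_\mu=e a_\mu=e$ and $p_{i1}=b_i a_1=b_i e=e$, so with the identification $1:=e\in G$ the matrix is normalised. The main obstacle is thus twofold: first, extracting the group $\mathcal{H}$-class structure and the stability property from the bare minimal-ideal hypothesis, which is the structural heart of the argument; and second, verifying that one and the same choice of connecting elements simultaneously delivers the coordinatising bijection, the Rees product formula, and the normalisation of $\P$.
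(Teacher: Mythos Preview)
The paper does not prove this theorem at all: Theorem~\ref{th:rees} is the classical Rees--Suschkewitsch structure theorem, and the paper merely quotes it as background, with no argument and no specific citation, before moving on to use it. So there is no ``paper's own proof'' to compare against.

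Your sketch is the standard textbook route (Green's relations, coordinatisation by connecting elements in a fixed row and column, then normalisation), and it is essentially correct. A couple of places deserve a word of caution if you ever write it out in full. First, the passage from ``simple with minimal one-sided ideals'' to ``every $\mathcal{H}$-class is a group and $ab\,\mathcal{R}\,a$, $ab\,\mathcal{L}\,b$'' is the real structural content and should not be waved through with the single word ``stable''; one typically argues that minimal left ideals are precisely the $\mathcal{L}$-classes, that $S$ is their disjoint union, and that the intersection of a minimal left and a minimal right ideal is a group. Second, your normalising choice in fact forces $a_\mu$ to be the idempotent $f_\mu$ of $H_{\mu 1}$ and $b_i$ the idempotent of $H_{1i}$: the identity $ef_\mu=e$ (from $e\,\mathcal{L}\,f_\mu$) gives existence, and the computation $f_\mu(ex)=(f_\mu e)x=f_\mu x=x$ for $x\in H_{\mu 1}$ gives injectivity of $x\mapsto ex$, hence uniqueness. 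With those points made explicit, bijectivity of $(\lambda,g,i)\mapsto a_\lambda g b_i$ and the normalisation $p_{1\mu}=p_{i1}=e$ follow exactly as you indicate.
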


Following Theorem~\ref{th:rees}, we denote any c.s. semigroup $S$ by $S=(G,\P,\Lambda,I)$. The group $G$ and the matrix $\P$ are called \textit{the structural group} and~\textit{sandwich-matrix}, respectively. The elements $\lambda,i$ occurring in a triple $(\lambda,g,i)\in S$ are \textit{the first and the second indexes}, respectively. 

By Theorem~\ref{th:rees}, any c.s. semigroup $S$ is a disjoint union of copies of the structural group  $G$. Clearly, the identity elements of the maximal subgroups are $(\lambda,p_{i\lambda}\1,i)$, $\lambda\in\Lambda$, $i\in I$. The inversion ${}\1$ in the subgroup defined by the indexes $\lambda,i$ is given by 
\[
(\lambda,g,i)\1=(\lambda,p_{i\lambda}\1g\1p_{i\lambda}\1,i).
\]

The class of c.s. semigroups is a variety in the language $\{\cdot,{ }\1\}$, since it is defined by the identities:
\[
xx\1 x=x,\; xx\1=x\1 x,\; (x\1)\1=x,\; (xyx)\1(xyx)=x\1 x.
\]

The structure of free c.s. semigroups is described by the next theorem.

\begin{theorem}\textup{\cite{clifford_free,rasin}}
\label{th:free_CSS_structure}
Let $X=\{x_1,x_2,\ldots,x_n\}$, $Y=\{y_{i\lambda}|i\in I,\lambda\in\Lambda\}$ be finite sets of letters, and $I=\Lambda=\{1,2,\ldots, n\}$. Let $F(X\cup Y)$ denote the free group generated by the set $X\cup Y$. Then, the free c.s. semigroup $F_{css}(X)$ generated by $X$ is defined by  $F_{css}=(F(X\cup Y),\P,I,\Lambda)$, where $\P=(y_{i\lambda})$ and the generators $x_i$ correspond to the triples $(i,x_i,i)\in F_{css}(X)$.  
\end{theorem}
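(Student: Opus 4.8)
\noindent The plan is to show that the Rees matrix semigroup $C=(F(X\cup Y),\P,I,\Lambda)$, together with the elements $g_i:=(i,x_i,i)$, satisfies the universal property characterising the free algebra on $X$ in the variety of c.s.\ semigroups in the language $\LL=\{\cdot,\1\}$. Thus it suffices to check three things: that $C$ is completely simple; that $C$ is generated as an $\LL$-algebra by $\{g_i\}$; and that every map from $X$ into a c.s.\ semigroup $S$ extends to an $\LL$-homomorphism $C\to S$. The first is immediate from Theorem~\ref{th:rees}. Throughout I work with $\P$ in the normalised form of that theorem, so that $p_{1\lambda}=p_{i1}=1$; then the maximal subgroup $H_{11}=\{(1,g,1):g\in F(X\cup Y)\}$ has identity $e_{11}=(1,1,1)$, and $(1,g,1)\mapsto g$ identifies $H_{11}$ with $F(X\cup Y)$.

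\noindent For generation --- which simultaneously gives the uniqueness of any extension --- let $T$ be the $\LL$-subalgebra generated by the $g_i$. First, every idempotent lies in $T$: since $g_\lambda g_i=(\lambda,x_\lambda p_{\lambda i}x_i,i)$ belongs to the maximal subgroup over $(\lambda,i)$, the element $(g_\lambda g_i)(g_\lambda g_i)\1$ is its identity $e_{\lambda i}=(\lambda,p_{i\lambda}\1,i)$, which in normalised form reads $e_{1i}=(1,1,i)$ and $e_{\lambda1}=(\lambda,1,1)$. A direct multiplication now gives $e_{1i}e_{\lambda1}=(1,p_{i\lambda},1)=(1,y_{i\lambda},1)$ and $e_{1i}g_ie_{i1}=(1,y_{ii}x_iy_{ii},1)$. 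Under the identification $H_{11}\cong F(X\cup Y)$ these show that the subgroup $T\cap H_{11}$ contains every $y_{i\lambda}$ and, after conjugating by $y_{ii}$, every $x_i$; as these generate $F(X\cup Y)$ we get $T\supseteq H_{11}$. Finally $(\lambda,g,i)=e_{\lambda1}(1,g,1)e_{1i}$ for all $g,\lambda,i$, so $T=C$, and any two $\LL$-homomorphisms out of $C$ that agree on the $g_i$ coincide.

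\noindent For the existence of an extension, write $S=(H,Q,I',\Lambda')$ with $Q=(q_{jm})$ normalised, and let $f(x_i)=(\mu_i,h_i,\nu_i)$. I will look for $\bar f$ of the shape $(\lambda,g,i)\mapsto(\mu_\lambda,a_\lambda\,\psi(g)\,b_i,\nu_i)$ for a group homomorphism $\psi\colon F(X\cup Y)\to H$ and elements $a_\lambda,b_i\in H$. A short computation shows that such a map is multiplicative precisely when $\psi(p_{i\mu})=b_i\,q_{\nu_i\mu_\mu}\,a_\mu$ for all $i,\mu$, and that it sends $g_i$ to $f(x_i)$ precisely when $a_i\psi(x_i)b_i=h_i$. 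Since $F(X\cup Y)$ is free, $\psi$ may be prescribed freely on the generators $x_i$ and $y_{i\lambda}$; the delicate constraints are those multiplicativity equations for which the left-hand side is forced to equal $1$ by normalisation, namely $i=1$ or $\mu=1$. These involve only the $a_\lambda,b_i$, and one solves them first (for instance $b_1=1$, $a_\mu=q_{\nu_1\mu_\mu}\1$, $b_i=q_{\nu_1\mu_1}q_{\nu_i\mu_1}\1$), then defines the remaining $\psi(y_{i\lambda})$ and $\psi(x_i)$ from the surviving equations. This yields a well-defined homomorphism of semigroups; since a homomorphism between c.s.\ semigroups automatically respects $\1$ (the inverse being the group inverse inside each maximal subgroup, which is preserved), $\bar f$ is an $\LL$-homomorphism extending $f$.

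\noindent The main obstacle is this last step. The identity $\psi(p_{i\mu})=b_i\,q_{\nu_i\mu_\mu}\,a_\mu$ is over-determined on the normalised boundary, where its left side is pinned to $1$, and the translation terms $a_\lambda,b_i$ are the only freedom available to absorb those relations; verifying that a consistent choice always exists --- equivalently, that the classification of homomorphisms of Rees matrix semigroups genuinely applies --- is where care is needed. By comparison the generation step, though computationally the core of the argument, becomes routine once all idempotents are located inside $T$.
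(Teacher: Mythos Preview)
The paper does not prove this theorem at all: it is quoted with references to Clifford and Rasin and used as a black box. So there is no ``paper's own proof'' to compare against.

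That said, your argument is a correct self-contained verification of the universal property and is exactly the kind of proof one finds in the original sources. The generation step is clean; your identification of the idempotents via $(g_\lambda g_i)(g_\lambda g_i)\1$ and the recovery of the free generators $y_{i\lambda}$ and $x_i$ inside $H_{11}$ is the standard move. For the extension step, your ansatz $\bar f(\lambda,g,i)=(\mu_\lambda,a_\lambda\psi(g)b_i,\nu_i)$ is precisely the general form of a Rees-matrix homomorphism, and your explicit choice $b_1=1$, $a_\mu=q_{\nu_1\mu_\mu}\1$, $b_i=q_{\nu_1\mu_1}q_{\nu_i\mu_1}\1$ does satisfy the boundary constraints coming from normalisation (one checks that the two families of equations at $i=1$ and $\mu=1$ agree at the corner $i=\mu=1$, which they do). The remark that a semigroup homomorphism between c.s.\ semigroups automatically preserves $\1$ is correct, since the image of a maximal subgroup lands in a single $\mathcal{H}$-class and group inverses are preserved there.

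One small point worth making explicit: the statement, as written in the paper, lists all $y_{i\lambda}$ with $i\in I$, $\lambda\in\Lambda$ as free generators, while you (correctly) work with $\P$ normalised, forcing $y_{1\lambda}=y_{i1}=1$. This is harmless --- the two Rees matrix semigroups are isomorphic via the usual renormalisation --- but your proof tacitly uses the normalised version, so strictly speaking you are proving the statement up to that isomorphism.
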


It is easy to check that the free c.s. semigroup of rank $1$ is isomorphic to the infinite cyclic semigroup of the same rank.

One can define the free product in the variety of c.s. semigroups. The representation of the free product of two c.s. semigroups is given in the next theorem.

\begin{theorem}\textup{\cite{jones}}
\label{th:free_product_CSS_structure}
Let $S=S_1\ast S_2$ be the free product of two c.s. semigroups $S_1=(G_1,\P_1,\Lambda_1,I_1)$, $S_2=(G_2,\P_2,\Lambda_2,I_2)$. Then, $S=(G,\P,\Lambda,I)$, where 
\begin{enumerate}
\item $\Lambda=\Lambda_1\sqcup\Lambda_2$, $I=I_1\sqcup I_2$;
\item the elements of the sandwich-matrix $\P=(p_{i\lambda})$ are
\[
p_{i\lambda}=
\begin{cases}
y_{i\lambda},\mbox{ если }i\in I_1, \lambda\in\Lambda_2\mbox{ or }i\in I_2, \lambda\in\Lambda_1\\
p_{i\lambda}^{(1)},\mbox{ если }i\in I_1, \lambda\in\Lambda_1\\
p_{i\lambda}^{(2)},\mbox{ если }i\in I_2, \lambda\in\Lambda_2\\
\end{cases}
\]  
where $p_{i\lambda}^{(j)}$ is the element of $\P_j$ with the indexes $i,\lambda$.
\item
\[
G=G_1\ast G_2\ast F(Y),
\]
where $F(Y)$ is the free group generated by the set
\[
Y=\{y_{i\lambda}|i\in I_1, \lambda\in\Lambda_2\mbox{ or }i\in I_2, \lambda\in\Lambda_1\}.
\]
\end{enumerate}

\end{theorem}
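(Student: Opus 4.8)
The plan is to derive this representation from the universal property that characterises $S=S_1\ast S_2$ as the coproduct in the variety of c.s. semigroups: there must be homomorphisms $\iota_1:S_1\to S$, $\iota_2:S_2\to S$ such that every pair of homomorphisms $\phi_1:S_1\to T$, $\phi_2:S_2\to T$ into an arbitrary c.s. semigroup $T$ factors through a unique homomorphism $\phi:S\to T$. First I would check that the candidate $\tilde S=(G,\P,\Lambda,I)$ with $G=G_1\ast G_2\ast F(Y)$ and the stated sandwich-matrix really is a c.s. semigroup: by the classical converse to Theorem~\ref{th:rees}, any Rees matrix construction over a group whose sandwich-matrix entries lie in that group is completely simple, so here it suffices to note that every $p_{i\lambda}$ indeed belongs to $G$. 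The embeddings $\iota_k$ send $(\lambda,g,i)\in S_k$ to the same triple read inside $\tilde S$; since for two indices both coming from $S_k$ the relevant sandwich entry is $p^{(k)}_{i\mu}$, the multiplication rules agree, and with injective index inclusions and the embedding $G_k\hookrightarrow G$ of a free factor, each $\iota_k$ is an injective homomorphism.

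The technical engine is the coordinate description of homomorphisms between c.s. semigroups. A homomorphism $\phi:(G,\P,\Lambda,I)\to(H,Q,M,J)$ is given by maps $\al:\Lambda\to M$, $\beta:I\to J$, a group homomorphism $\gamma:G\to H$, and families $(u_i)_{i\in I}$, $(v_\lambda)_{\lambda\in\Lambda}$ in $H$ via
\[
\phi(\lambda,g,i)=\bigl(\al(\lambda),\,v_\lambda\,\gamma(g)\,u_i,\,\beta(i)\bigr),
\]
and computing $\phi$ on a product shows this is a homomorphism precisely when
\[
\gamma(p_{i\mu})=u_i\,q_{\beta(i)\,\al(\mu)}\,v_\mu\qquad\text{for all }i\in I,\ \mu\in\Lambda.
\]
I would record this identity, since it is exactly the constraint that the free factor $F(Y)$ is built to absorb.

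To verify the universal property, take $\phi_1,\phi_2$ into $T=(H,Q,M,J)$, described by data $(\al_k,\beta_k,\gamma_k,u^{(k)},v^{(k)})$. Amalgamate the index maps by $\al=\al_1\sqcup\al_2$, $\beta=\beta_1\sqcup\beta_2$ and the families $u,v$ blockwise according to the $S_k$ a given index belongs to. On the structural group put $\gamma|_{G_1}=\gamma_1$, $\gamma|_{G_2}=\gamma_2$, and on the free generators
\[
\gamma(y_{i\lambda})=u_i\,q_{\beta(i)\,\al(\lambda)}\,v_\lambda .
\]
Because $G=G_1\ast G_2\ast F(Y)$ is a coproduct of groups, these assignments extend to a unique group homomorphism $\gamma:G\to H$. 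For index pairs inside a single block the compatibility identity holds since $\phi_k$ was a homomorphism; for the mixed pairs $i\in I_1,\lambda\in\Lambda_2$ (and symmetrically) it holds by the very definition of $\gamma(y_{i\lambda})$ together with $p_{i\lambda}=y_{i\lambda}$. Thus the assembled data define a homomorphism $\phi:\tilde S\to T$ restricting to $\phi_1,\phi_2$ along $\iota_1,\iota_2$.

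The step I expect to be the most delicate is \emph{uniqueness}, because the coordinate data of a c.s. homomorphism are themselves not unique: there is a gauge redundancy between $\gamma$ and the families $(u_i),(v_\lambda)$. What has to be made rigorous is that, whatever consistent data represent an extension $\phi$, the restrictions fix $\gamma$ on $G_1$ and $G_2$, and the compatibility identity then forces $\gamma(y_{i\lambda})$ on every mixed pair; since $G_1,G_2$ and the $y_{i\lambda}$ generate $G$ and $F(Y)$ is free on exactly these $y_{i\lambda}$, no further relations are available, so $\gamma$—and hence $\phi$—is determined. This is precisely where the appearance of the free group $F(Y)$ on exactly the mixed index pairs, neither larger nor smaller, is essential: it supplies enough freedom to realise arbitrary $\gamma(y_{i\lambda})$ while imposing no spurious identities that could block the factorisation.
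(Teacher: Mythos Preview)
The paper does not prove this theorem at all: it is quoted from Jones \cite{jones} as a background result, with no argument supplied. So there is no ``paper's own proof'' to compare against; your sketch is in fact a reconstruction of the standard verification of the universal property, which is essentially how Jones proceeds.

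Your existence argument is sound. The one place where your write-up is genuinely loose is uniqueness. You correctly flag the gauge redundancy in the coordinate data $(\al,\beta,\gamma,u,v)$, but then argue as if ``the restrictions fix $\gamma$ on $G_1$ and $G_2$'', which is not literally true: different coordinate presentations of the \emph{same} $\phi$ give different $\gamma$'s. What you actually need is uniqueness of $\phi$ as a map, not of its coordinates, and the clean way to get that is to show that $\iota_1(S_1)\cup\iota_2(S_2)$ generates $\tilde S$ as a c.s.\ semigroup. This is easy but should be said: multiplying an element of $\iota_1(S_1)$ by one of $\iota_2(S_2)$ produces a sandwich entry $y_{i\lambda}$ in the middle coordinate, and since $G_1,G_2$ together with all the $y_{i\lambda}$ generate $G=G_1\ast G_2\ast F(Y)$, every triple in $\tilde S$ is reachable. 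Once generation is established, uniqueness of $\phi$ is immediate and the gauge issue evaporates. I would replace your final paragraph with this generation argument rather than trying to pin down $\gamma$ directly.
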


\section{Algebraic geometry over semigroups}

All definitions below are derived from the general notions of~\cite{uniTh_I}, where the notions of algebraic geometry were formulated for an arbitrary algebraic structure in the language with no predicates.

Since the inversion ${ }\1$ is an algebraic operation in any c.s. semigroup, we consider the language $\LL_0=\{\cdot,\1\}$. For a given c.s. semigroup $S$ one can extend $\LL_0$ by new constants $\{s|s\in S\}$ which correspond to the elements of $S$. The obtained language is denoted by $\LL_S$ and in the sequel all semigroups are considered in this language.

Let $X$ be a finite set of variables $x_1,x_2,\ldots,x_n$. \textit{A term} of a language $\LL_S$ ($\LL_S$-term) in variables $X$ is one of the following expressions:
\begin{enumerate}
\item variable $x_i$;
\item constant $s$;
\item a product of two terms;
\item $(t(X))\1$, where $t(X)$ is a term.
\end{enumerate}

For example, the expressions $xs(y^2x)\1$, $(xs_1\1 y)\1 s_2x^2$ are $\LL_S$-terms.

{\it An equation} over $\LL_S$ is an equality of two $\LL_S$-terms $t(X)=s(X)$. {\it A system of equations} over $\LL_S$ ({\it a system} for shortness) is an arbitrary set of equations over $\LL_S$.
 
A point $P=(p_1,p_2,\ldots,p_n)\in S^n$ is a \textit{solution} of a system $\Ss$ in variables $x_1,x_2,\ldots,x_n$, if the substitution $x_i=p_i$ reduces each equation of $\Ss$ to a true equality in the semigroup $S$. The set of all solutions of a system $\Ss$ in the semigroup $S$ is denoted by $\V_S(\Ss)$. A set $Y\subseteq S^n$ is called  {\it algebraic} over the language $\LL_S$ if there exists a system over $\LL_S$ in variables $x_1,x_2,\ldots,x_n$ with the solution set $Y$. 

Two systems are called \textit{equivalent} over a c.s. semigroup $S$ if they have the same solution in $S$.

Following~\cite{uniTh_IV}, let us give the main definition of our paper.

A c.s. semigroup $S$ is an {\it equational domain} ({\it e.d.} for shortness) in the language $\LL_S$ if for any finite set of algebraic sets $Y_1,Y_2,\ldots,Y_n$ the union $Y=Y_1\cup Y_2\cup\ldots\cup Y_n$ is algebraic. 

The next theorem contains necessary and sufficient conditions for a semigroup to be an e.d.  

\begin{theorem}\textup{\cite{uniTh_IV}}
\label{th:about_M}
A semigroup $S$ in the language $\LL_S$ is an e.d. iff the set 
\[
\M_{sem}=\{(x_1,x_2,x_3,x_4)|x_1=x_2\mbox{ or }x_3=x_4\}\subseteq S^4
\]
is algebraic, i.e. there exists a system $\Ss$ in the variables $x_1,x_2,x_3,x_4$ with the solution set $\M_{sem}$.
\end{theorem}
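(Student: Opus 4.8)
The plan is to prove the two implications separately, with the reverse one carrying essentially all the content. For the forward direction, suppose $S$ is an e.d. I would simply observe that $\M_{sem}$ is itself the union of two manifestly algebraic sets: $\M_{sem}=Y_1\cup Y_2$, where $Y_1=\{(x_1,x_2,x_3,x_4):x_1=x_2\}$ is the solution set of the single equation $x_1=x_2$ and $Y_2=\{(x_1,x_2,x_3,x_4):x_3=x_4\}$ is the solution set of $x_3=x_4$. Since $S$ is an e.d., this union is algebraic, so $\M_{sem}$ is algebraic. This direction requires no computation at all.

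For the reverse direction, suppose $\M_{sem}=\V_S(\Ss_{\M})$ for some system $\Ss_{\M}$ in the variables $x_1,x_2,x_3,x_4$. By an immediate induction on the number of sets it suffices to show that the union $Y_1\cup Y_2$ of two algebraic sets $Y_1=\V_S(\Ss_1)$ and $Y_2=\V_S(\Ss_2)$ in variables $x_1,\ldots,x_n$ is algebraic. Writing $\Ss_1=\{t_\alpha=s_\alpha\}$ and $\Ss_2=\{u_\beta=v_\beta\}$, the key step is the purely logical distributive identity
\[
\Big(\bigwedge_\alpha t_\alpha=s_\alpha\Big)\ \vee\ \Big(\bigwedge_\beta u_\beta=v_\beta\Big)\ \Longleftrightarrow\ \bigwedge_{\alpha,\beta}\big(t_\alpha=s_\alpha\ \vee\ u_\beta=v_\beta\big),
\]
which holds pointwise for every $P\in S^n$ regardless of whether the index sets are finite. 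Consequently $Y_1\cup Y_2$ equals the intersection over all pairs $(\alpha,\beta)$ of the disjunction sets $D_{\alpha\beta}=\{P\in S^n:t_\alpha(P)=s_\alpha(P)\text{ or }u_\beta(P)=v_\beta(P)\}$.

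It remains to show that each $D_{\alpha\beta}$ is algebraic, and this is precisely what the hypothesis on $\M_{sem}$ provides. By construction, $P\in D_{\alpha\beta}$ exactly when the tuple $(t_\alpha(P),s_\alpha(P),u_\beta(P),v_\beta(P))$ lies in $\M_{sem}$. Substituting the $\LL_S$-terms $t_\alpha,s_\alpha,u_\beta,v_\beta$ (which are terms in $x_1,\ldots,x_n$) for the variables $x_1,x_2,x_3,x_4$ in every equation of $\Ss_{\M}$ yields a system $\Ss_{\M}^{\alpha\beta}$ in the variables $x_1,\ldots,x_n$ whose solution set is exactly $D_{\alpha\beta}$. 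Forming the union $\bigcup_{\alpha,\beta}\Ss_{\M}^{\alpha\beta}$ then gives a single system whose solution set is $\bigcap_{\alpha,\beta}D_{\alpha\beta}=Y_1\cup Y_2$, which establishes that $Y_1\cup Y_2$ is algebraic.

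I expect the main obstacle to be the bookkeeping of the reverse direction rather than any deep idea. One has to verify carefully that substituting terms into the defining system of $\M_{sem}$ really produces a system whose solution set is the intended disjunction set --- this relies on the closure of $\LL_S$-terms under substitution of terms for variables --- and one has to check that the distributive rewriting stays valid when $\Ss_1$ and $\Ss_2$ are arbitrary, possibly infinite, systems. The latter is reassuring once stated pointwise: a point fails to lie in $Y_1\cup Y_2$ iff some equation of $\Ss_1$ and some equation of $\Ss_2$ both fail at it, which is exactly the negation of the right-hand side, so the equivalence is insensitive to cardinality.
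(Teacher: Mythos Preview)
Your argument is correct and is precisely the standard proof of this criterion. Note, however, that the paper does not give its own proof of this theorem: it is quoted from~\cite{uniTh_IV} and used as a black box, so there is no in-paper proof to compare against. What you have written is essentially the argument one finds in~\cite{uniTh_IV}, and in fact the paper itself uses exactly your distributive-law manipulation later (in the proof of Lemma~\ref{l:x=1_y=1_implies_domain}) when it writes $\M_{sem}=\bigcap_{i,j}\V_S(t_i=\one)\cup\V_S(t_j=\one)$ and then substitutes into the defining system of $\M$.
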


Below we study equations over groups, therefore we should give some definitions of algebraic geometry over groups. Any group $G$ below will be considered in the language $\LL_G=\{\cdot,^{-1},1\}\cup\{g|g\in G\}$ where constants of $\LL_G$ correspond to elements of $G$. \textit{An $\LL_G$-term}  in the variables  $X=\{x_1,x_2,\ldots,x_n\}$ is defined in the same way as it is the definition of a term over c.s. semigroup. However, in groups the identity $(xy)\1=x\1 y\1$ takes place, hence any $\LL_G$-term over a group $G$ is equivalent to a finite product that consists of the variables in integer powers and constants $g\in G$. In other words, an $\LL_G$-term is an element of the free product $F(X)\ast G$, where $F(X)$ is a free group generated by the set $X$. 

The definitions of equations, algebraic sets and equational domains over groups are similar to the corresponding definitions in the semigroup case. 

For the groups of the language $\LL_G$ we have the following result.

\begin{theorem}\textup{\cite{uniTh_IV}}
A group $G$ in the language $\LL_G$ is an e.d. iff the set 
\label{th:criterion_for_groups}
\begin{equation*}
\M_{gr}=\{(x_1,x_2)|x_1=1\mbox{ or }x_2=1\}\subseteq G^2
\end{equation*}
is algebraic, i.e. there exists a system $\Ss$ in variables $x_1,x_2$ with the solution set $\M_{gr}$.
\end{theorem}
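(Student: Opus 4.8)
The plan is to prove both implications directly, without routing through Theorem~\ref{th:about_M}, by exploiting the fact that over a group every equation $t=s$ can be collected to one side as $ts\1=1$.

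The forward implication is immediate. If $G$ is an e.d., consider in $G^2$ the two algebraic sets $Y_1=\V_G(x_1=1)$ and $Y_2=\V_G(x_2=1)$. Their union is exactly $\M_{gr}$, so by the definition of an equational domain the set $\M_{gr}$ is algebraic. No further work is needed here.

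For the converse I would first reduce, by induction on the number of sets, to showing that $Y_1\cup Y_2$ is algebraic whenever $Y_1,Y_2\subseteq G^n$ are. Writing each defining equation in the form $w(X)=1$, I may take $Y_1=\V_G(\{w_i(X)=1\}_{i\in J_1})$ and $Y_2=\V_G(\{v_j(X)=1\}_{j\in J_2})$ for families of $\LL_G$-terms. The crucial combinatorial identity is the distributive law
\[
Y_1\cup Y_2=\bigcap_{i\in J_1,\,j\in J_2}\{P\in G^n:\ w_i(P)=1\ \text{or}\ v_j(P)=1\},
\]
which holds because a point outside $Y_1\cup Y_2$ violates some $w_{i_0}=1$ and some $v_{j_0}=1$ at once, contradicting precisely the $(i_0,j_0)$ factor on the right. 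It then remains to check that each factor is algebraic and that intersections of algebraic sets are algebraic; the latter is routine since the union of the defining systems is again a system. For the former I invoke the hypothesis: fix a system $\T=\{u_k(x_1,x_2)=1\}$ with $\V_G(\T)=\M_{gr}$ and substitute $x_1\mapsto w_i(X)$, $x_2\mapsto v_j(X)$. As $\LL_G$-terms are elements of $F(X)\ast G$ and hence closed under such substitution, the system $\{u_k(w_i(X),v_j(X))=1\}$ is a genuine system over $\LL_G$, and a point $P$ satisfies it iff $(w_i(P),v_j(P))\in\M_{gr}$, i.e. iff $w_i(P)=1$ or $v_j(P)=1$. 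This exhibits each factor as algebraic and finishes the converse.

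The main obstacle is conceptual rather than computational: one must hit upon the disjunctive building block $\{w_i=1\text{ or }v_j=1\}$ and verify the two facts that make it work, namely the substitution principle (plugging terms into the system defining $\M_{gr}$ yields an algebraic set with the intended ``or''-solution set) and the distributive rewriting of the union as an intersection of these blocks. Once these are in place, the base case, the induction on the number of sets, and the closure of algebraic sets under intersection are all immediate.
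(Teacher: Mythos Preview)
The paper does not prove this theorem; it is quoted from~\cite{uniTh_IV} without proof, so there is no in-paper argument to compare against directly. That said, your proof is correct: the forward direction is immediate, and for the converse the distributive identity
\[
Y_1\cup Y_2=\bigcap_{i,j}\{P:\ w_i(P)=1\ \text{or}\ v_j(P)=1\}
\]
together with the substitution $x_1\mapsto w_i(X)$, $x_2\mapsto v_j(X)$ into the system defining $\M_{gr}$ does exactly what is needed, and closure under intersection is trivial.

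It is worth noting that your argument is essentially the same mechanism the paper deploys in Lemma~\ref{l:x=1_y=1_implies_domain} for the c.s.\ semigroup analogue: there too the target is rewritten as an intersection of ``$t_i=\one$ or $t_j=\one$'' blocks, each of which is made algebraic by substituting into the system for $\M$. The one structural difference is that the paper routes through Theorem~\ref{th:about_M}, reducing the e.d.\ property to the algebraicity of the single set $\M_{sem}$, whereas you bypass this and treat an arbitrary union $Y_1\cup Y_2$ directly. Both approaches are valid; your direct route is slightly more self-contained, and it is especially natural for groups because every equation already normalizes to the form $w(X)=1$, which is precisely what makes the substitution step clean.
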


One can reformulate Theorem~\ref{th:criterion_for_groups} in a simpler  form using the following definition. An element $x\neq 1$ of a group $G$ is a \textit{zero-divisor} if there exists $1\neq y\in G$ such that for any $g\in G$ it holds $[x,y^g]=1$ (here $y^g=gyg^{-1}$, $[a,b]=a^{-1}b^{-1}ab$).

\begin{theorem}\textup{\cite{uniTh_IV}}
\label{th:zero_divisors}
A group $G$ in the language $\LL_G$ is an e.d. iff it does not contain zero-divisors.
\end{theorem}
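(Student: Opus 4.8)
The plan is to deduce the statement from Theorem~\ref{th:criterion_for_groups}, which reduces everything to a single question: the group $G$ is an e.d.\ if and only if the set $\M_{gr}$ is algebraic. Thus it suffices to prove that $\M_{gr}$ is algebraic precisely when $G$ has no zero-divisors. I would treat the two implications separately, but both revolve around the \emph{commutator system}
\[
\Ss=\{\,[x_1,x_2^{\,g}]=1 \mid g\in G\,\},
\]
whose solutions are exactly the pairs $(x_1,x_2)$ in which $x_1$ commutes with every conjugate of $x_2$.

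For the ``if'' direction, suppose $G$ contains no zero-divisors and consider the system $\Ss$ above. A direct check shows $\M_{gr}\subseteq\V_G(\Ss)$: if $x_1=1$ or $x_2=1$, then every commutator $[x_1,x_2^{\,g}]$ is trivial. Conversely, if $(a,b)\in\V_G(\Ss)$ but $a\neq1$ and $b\neq1$, then $a$ commutes with every conjugate $b^{\,g}$, so by definition $a$ is a zero-divisor with witness $b$ --- contradicting the hypothesis. Hence $\V_G(\Ss)=\M_{gr}$, the set $\M_{gr}$ is algebraic, and $G$ is an e.d. This part is routine, and the absence of zero-divisors is used exactly to force the reverse inclusion.

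For the ``only if'' direction I argue by contraposition: assuming $G$ has a zero-divisor $x$ with witness $y$ (so $x\neq1$, $y\neq1$, and $[x,y^{\,g}]=1$ for all $g\in G$), I show $\M_{gr}$ is not algebraic. It suffices to produce a point $(x,y)\notin\M_{gr}$ at which \emph{every} equation $w(x_1,x_2)=1$ that holds identically on $\M_{gr}$ is nevertheless satisfied; for then, were $\M_{gr}=\V_G(\Ss)$, the pair $(x,y)$ would satisfy all equations of $\Ss$ and hence lie in $\M_{gr}$, a contradiction. The natural candidate is the zero-divisor pair itself, so the whole matter reduces to the claim: if $w\in F(x_1,x_2)\ast G$ satisfies $w(a,1)=1$ and $w(1,b)=1$ for all $a,b\in G$, then $w(x,y)=1$. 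To prove it I would first locate $w(x,y)$ using the two normal closures: reducing modulo $\langle\langle y\rangle\rangle$ kills $x_2$ and turns $w(x,y)$ into $w(x,1)=1$, so $w(x,y)\in\langle\langle y\rangle\rangle$, and symmetrically $w(x,y)\in\langle\langle x\rangle\rangle$. I would then write $w$ in reduced form as an alternating product of variable-syllables and constants and use the centralizing identity $x^{\varepsilon}c\,y^{\delta}=(y^{c})^{\delta}x^{\varepsilon}c$ --- valid precisely because $x$ commutes with the conjugate $y^{c}$ --- to migrate every occurrence of $x_2$ to the left of every occurrence of $x_1$, matching the two resulting blocks against the identically-trivial words $w(x_1,1)$ and $w(1,x_2)$ to collapse $w(x,y)$ to $1$.

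The main obstacle is exactly this last migration step. The commuting identity moves an $x_1$-syllable past an adjacent $x_2$-syllable only at the cost of conjugating the latter by the intervening constant, so the $x_1$-syllables cannot be moved past the bare constants of $G$ that separate the variable-syllables; keeping track of how these constants are redistributed, and verifying that the two sorted blocks really do reduce to the two vanishing words, is the delicate bookkeeping at the heart of the argument. It is here that the full strength of the definition of a zero-divisor enters: one must commute $x$ not merely with $y$ but with \emph{all} of its conjugates $y^{\,g}$, equivalently with the entire normal closure $\langle\langle y\rangle\rangle$, which is precisely what allows every intervening constant to be absorbed.
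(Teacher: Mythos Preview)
The paper does not prove this theorem at all: it is quoted from \cite{uniTh_IV} and stated without proof. So there is no ``paper's own proof'' to compare your proposal against; your proposal must be judged on its own merits.

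Your argument is essentially correct, and it is the standard one. The ``if'' direction is exactly right: the commutator system $\Ss=\{[x_1,x_2^{\,g}]=1\mid g\in G\}$ defines $\M_{gr}$ precisely when $G$ has no zero-divisors. For the ``only if'' direction your migration idea works and the bookkeeping is lighter than you fear. The point is that centralising every conjugate $y^{g}$ is the same as centralising the whole normal closure $\langle\!\langle y\rangle\!\rangle$; hence when you push a $y$-syllable leftward it commutes past every $x$-syllable \emph{without} conjugation and picks up conjugation only from the intervening constants. Doing this for every $y$-syllable in $w(x,y)$ yields
\[
w(x,y)\;=\;Y\cdot w(x,1),
\]
where $Y$ is a product of conjugates of $y^{\pm1}$ by products of the constants $c_i$ alone (no $x$'s appear in the conjugators). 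Performing the identical migration on $w(1,y)$ gives $w(1,y)=Y\cdot w(1,1)$ with the \emph{same} $Y$. Combining,
\[
w(x,y)\;=\;w(1,y)\,w(1,1)^{-1}\,w(x,1),
\]
which is $1$ since each factor on the right is $1$ by hypothesis. So the ``delicate bookkeeping'' collapses to this single identity.

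One remark: your preliminary step of locating $w(x,y)$ in $\langle\!\langle x\rangle\!\rangle\cap\langle\!\langle y\rangle\!\rangle$ is correct (the quotient argument you sketch does show it), but by itself it does not conclude the proof, because that intersection need not be trivial. It is the migration argument that actually finishes, and it does not rely on the normal-closure inclusions; you may safely drop that step.
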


Using Theorem~\ref{th:zero_divisors}, one can obtain the next properties of free objects in the variety of groups.
 
\begin{corollary}\textup{\cite{uniTh_IV}}
\label{cor:free_group_is_ED}
Every non-abelian free group $G$ is an e.d. in the group language $\LL_G$ (this was initially proved by G.~Gurevich, see the proof in~\cite{makanin}). 
\end{corollary}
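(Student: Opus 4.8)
The plan is to deduce this from Theorem~\ref{th:zero_divisors}: it suffices to show that a non-abelian free group $G$ contains no zero-divisors. Thus I would fix arbitrary $x\neq 1$ and $y\neq 1$ in $G$ and produce an element $g\in G$ with $[x,y^g]\neq 1$; this shows that $x$ cannot be a zero-divisor witnessed by $y$, and since $x,y$ are arbitrary, $G$ has no zero-divisors and is therefore an e.d. by Theorem~\ref{th:zero_divisors}.

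The engine of the argument is the classical structure of centralizers in free groups: every nontrivial element $w$ lies in a unique maximal cyclic subgroup, and two nontrivial elements commute if and only if they are powers of one common element. Equivalently, the centralizer $C(w)$ of any $w\neq 1$ is infinite cyclic, and since $G$ has rank $\geq 2$ this centralizer has infinite index in $G$. I would recall these facts (from Nielsen--Schreier theory) as the standing tools.

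Now suppose, for contradiction, that some $x\neq 1$ is a zero-divisor witnessed by $y\neq 1$, so that $[x,y^g]=1$ for every $g$. Then every conjugate $y^g=gyg\1$ lies in the centralizer $C(x)$. Hence the normal closure $\lb\lb y\rb\rb$, generated by all the $y^g$, is contained in the infinite cyclic group $C(x)$; being a subgroup of a cyclic group it is itself cyclic, and it is nontrivial since it contains $y$. Thus $G$ would possess a nontrivial cyclic normal subgroup $N=\lb c\rb$. Normality forces $gcg\1$ to be a generator of $N$, hence $gcg\1=c^{\pm1}$, so the centralizer $C(c)$ has index at most $2$ in $G$; but $C(c)$ is cyclic, which would make $G$ virtually cyclic and hence, being free, of rank $\leq 1$ — contradicting non-abelianness.

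I expect the only genuine content to be the two structural facts about free groups, namely cyclicity of centralizers and the absence of nontrivial cyclic normal subgroups in a free group of rank $\geq 2$; everything else is bookkeeping. An alternative to the normal-closure step, avoiding any classification of normal subgroups, is to argue directly: writing $x=w^p$ and $y=w^q$ for a common root $w$, commuting of $x$ with $y^g=(gwg\1)^q$ forces $gwg\1=w^{\pm1}$, so the set of admissible $g$ lies in $C(w)$ together with at most one of its cosets; since $C(w)$ has infinite index there is a $g$ outside this set, yielding $[x,y^g]\neq 1$.
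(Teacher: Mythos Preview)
Your argument is correct. The paper does not supply its own proof of this corollary: it is quoted from~\cite{uniTh_IV} (with the historical attribution to Gurevich/Makanin), and the only guidance the paper gives is the sentence preceding it, namely that the result is obtained ``using Theorem~\ref{th:zero_divisors}''. Your proposal follows precisely that route---reduce to the absence of zero-divisors and then invoke the classical fact that centralizers in a free group are cyclic---so it matches the intended approach.

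One small remark on the alternative version at the end: when you write $x=w^p$, $y=w^q$, you are using that $x$ and $y$ commute; this is legitimate because the zero-divisor hypothesis with $g=1$ gives $[x,y]=1$, but it would be worth saying so explicitly. You should also take $w$ to be the primitive root (not a proper power), so that the step ``$w^p$ commutes with $(gwg\1)^q$ forces $gwg\1=w^{\pm1}$'' goes through cleanly via uniqueness of primitive roots. With those clarifications, both variants are complete.
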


\begin{corollary}\textup{\cite{uniTh_IV}}
\label{cor:free_product_of_groups_is_ED}
Every free product $G=G_1\ast G_2$, except for $G=\mathbb{Z}_2\ast\mathbb{Z}_2$ ($\mathbb{Z}_2$ is the cyclic group of the order $2$), is an e.d. in the language $\LL_G$. 
\end{corollary}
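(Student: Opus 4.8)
The plan is to apply the zero-divisor criterion of Theorem~\ref{th:zero_divisors}: it suffices to show that $G=G_1\ast G_2$ (with both factors nontrivial and $G\neq\mathbb{Z}_2\ast\mathbb{Z}_2$) contains no zero-divisors. Suppose, for contradiction, that $x\neq 1$ is a zero-divisor with witness $y\neq 1$, so that $[x,y^g]=1$ for every $g\in G$; equivalently, each conjugate $y^g$ lies in the centralizer $C_G(x)$, and hence the normal closure $N=\lb\lb y\rb\rb$ of $y$ in $G$ satisfies $N\le C_G(x)$. Since being a zero-divisor is invariant under conjugation (if $y$ witnesses $x$, then $hyh\1$ witnesses $hxh\1$, because $[hxh\1,(hyh\1)^g]=h[x,y^{h\1gh}]h\1$ and $h\1gh$ ranges over all of $G$), I may freely replace $x$ by a conjugate. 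The argument then splits according to the classical description of centralizers in a free product.

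First I would treat the case where $x$ is conjugate into a factor. Conjugating, assume $x\in G_1\setminus\{1\}$; then $C_G(x)=C_{G_1}(x)\subseteq G_1$, so the nontrivial normal subgroup $N$ lies inside $G_1$. Choosing any $t\in G_2\setminus\{1\}$ and any $n\in N\setminus\{1\}\subseteq G_1$, normality gives $tnt\1\in N\subseteq G_1$; but the word $t\,n\,t\1$ is reduced of syllable length three and therefore cannot lie in the factor $G_1$, a contradiction. Here I use only that both factors are nontrivial.

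It remains to handle the case where $x$ is not conjugate into any factor. Then $C_G(x)$ is infinite cyclic, so $N\le C_G(x)$ forces $N$ to be a nontrivial infinite cyclic normal subgroup, say $N=\lb c\rb$. Conjugation yields a homomorphism $G\to\operatorname{Aut}(N)\cong\mathbb{Z}_2$ with kernel $C_G(c)$, whence $[G:C_G(c)]\le 2$; and since $c$ is a nontrivial power of a generator of $C_G(x)$, which is itself not conjugate into a factor, the element $c$ is not conjugate into a factor either, so $C_G(c)$ is again infinite cyclic. Thus $G$ contains an infinite cyclic subgroup of index at most two, i.e.\ $G$ is virtually cyclic. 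The final step is to invoke the classical fact that a free product of two nontrivial groups is virtually cyclic only when it equals $\mathbb{Z}_2\ast\mathbb{Z}_2$ (equivalently, every other such free product contains a free subgroup of rank two). As $G\neq\mathbb{Z}_2\ast\mathbb{Z}_2$ by hypothesis, this is a contradiction, and the proof is complete.

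The routine inputs are the structure of centralizers in free products and the normal-form argument, both standard in combinatorial group theory; the one genuinely load-bearing step is the last, identifying $\mathbb{Z}_2\ast\mathbb{Z}_2$ (the infinite dihedral group) as the unique virtually cyclic free product, since this is exactly where the excluded case enters. That the exception is real rather than an artefact of the proof is seen directly: in $D_\infty=\lb a,b\mid a^2=b^2=1\rb$ the conjugates of $y=ab$ are just $ab$ and $(ab)\1$, both lying in $C_G(ab)=\lb ab\rb$, so $x=ab$ is a genuine zero-divisor and $\mathbb{Z}_2\ast\mathbb{Z}_2$ is not an e.d.
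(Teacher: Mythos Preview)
The paper does not supply its own proof of this corollary: it is quoted from \cite{uniTh_IV} as a background fact and no argument appears in the text. There is therefore nothing to compare against; what one can say is whether your argument stands on its own, and it does.

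Your use of Theorem~\ref{th:zero_divisors} is the natural route. The observation that the normal closure $\lb\lb y\rb\rb$ must lie in $C_G(x)$ is the right organizing principle. In the first case the centralizer fact $C_G(x)\subseteq G_1$ for $x\in G_1\setminus\{1\}$ and the normal-form contradiction are standard and correctly invoked. In the second case your chain of deductions is sound: $C_G(x)$ infinite cyclic forces $N$ infinite cyclic; the conjugation action on $N$ has image in $\operatorname{Aut}(\mathbb{Z})\cong\mathbb{Z}_2$; the generator of $C_G(x)$ (and hence $c$) cannot be conjugate into a factor since $x$ is a power of it; so $C_G(c)$ is again infinite cyclic and $G$ is virtually cyclic, which singles out $\mathbb{Z}_2\ast\mathbb{Z}_2$. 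The closing remark that $D_\infty$ genuinely fails, with $x=y=ab$, confirms that the exception is not an artefact.

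One small point worth making explicit in the write-up: the statement tacitly assumes both factors are nontrivial (otherwise $G=G_1$ and the claim is false in general); you use this in both cases, so it should be stated as a hypothesis rather than left implicit.
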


\section{Main results}

A sandwich-matrix $\P$ is said to be \textit{non-singular} if it does not contain two equal rows or columns.

Let $s_1,s_2$ be two distinct elements of a c.s. semigroup $S$. We say that an $\LL_S$-term $t(x)$ \textit{separates} $s_1,s_2$ if $t(s_1)\neq t(s_2)$.

\begin{lemma}\textup{(Lemma 4.2 of \cite{ED_I})}
\label{l:exists_dist_term}
If a c.s. semigroup $S=(G,\P,\Lambda,I)$ has a non-singular sandwich-matrix $\P$, then for any pair of distinct elements $s_1,s_2\in S$ there exists an $\LL_S$-term
\begin{equation}
t(x)=(1,1,i)x(\lambda,1,1),
\label{eq:dist_term}
\end{equation}
separating $s_1,s_2$ for some $\lambda\in\Lambda$, $i\in I$.
\end{lemma}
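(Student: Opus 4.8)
The plan is to compute the value $t(s)$ on an arbitrary element $s=(\mu,g,j)\in S$ and thereby reduce the separation requirement to a purely group-theoretic condition on $G$ and $\P$. Applying the Rees multiplication rule of Theorem~\ref{th:rees} twice to $t(x)=(1,1,i)\,x\,(\lambda,1,1)$, one first gets $(1,1,i)(\mu,g,j)=(1,p_{i\mu}g,j)$ and then
\begin{equation*}
t\big((\mu,g,j)\big)=(1,\,p_{i\mu}\,g\,p_{j\lambda},\,1).
\end{equation*}
Thus $t$ sends every element to a triple of the form $(1,\ast,1)$, in which only the middle (group) coordinate carries information. Consequently, writing $s_1=(\mu_1,g_1,j_1)$ and $s_2=(\mu_2,g_2,j_2)$, the term $t$ separates them for a given pair $(i,\lambda)$ if and only if $p_{i\mu_1}\,g_1\,p_{j_1\lambda}\neq p_{i\mu_2}\,g_2\,p_{j_2\lambda}$.

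I would argue by contraposition: assume that no term of the form \eqref{eq:dist_term} separates $s_1$ and $s_2$, i.e. that
\begin{equation*}
p_{i\mu_1}\,g_1\,p_{j_1\lambda}=p_{i\mu_2}\,g_2\,p_{j_2\lambda}
\end{equation*}
holds for \emph{every} $i\in I$ and $\lambda\in\Lambda$, and then deduce $s_1=s_2$. The key feature is that these equalities live in the structural group $G$, where left and right cancellation are available, so specialising the free indices $i$ and $\lambda$ isolates each coordinate of $s_1,s_2$ in turn.

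Concretely, three specialisations recover the three coordinates. Taking $i=1$ and $\lambda=1$ and using the normalisation of $\P$ (so that $p_{1\mu}=p_{j1}=1$) collapses the equality to $g_1=g_2=:g$. Feeding this back, the choice $i=1$ with $\lambda$ arbitrary yields $g\,p_{j_1\lambda}=g\,p_{j_2\lambda}$, whence $p_{j_1\lambda}=p_{j_2\lambda}$ for all $\lambda$ after cancelling $g$ on the left; that is, rows $j_1$ and $j_2$ of $\P$ coincide. Symmetrically, $\lambda=1$ with $i$ arbitrary gives $p_{i\mu_1}=p_{i\mu_2}$ for all $i$, so columns $\mu_1$ and $\mu_2$ of $\P$ coincide. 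Non-singularity of $\P$ then forces $j_1=j_2$ and $\mu_1=\mu_2$, and combined with $g_1=g_2$ this gives $s_1=s_2$, which is the desired contradiction.

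The evaluation of $t(s)$ and the group cancellations are routine; the only steps that genuinely consume the hypotheses are the final ones, where normalisation is used to strip the boundary entries of $\P$ down to $1$ and cleanly expose $g$, and non-singularity is used to pass from equal rows (respectively columns) of $\P$ back to equal indices. I expect the main obstacle to be bookkeeping rather than mathematical depth: one must keep the roles of the first index $\mu\in\Lambda$, the second index $j\in I$, and the sandwich entries $p_{i\mu},p_{j\lambda}$ straight, since the row/column conventions for $\P$ and the order of multiplication must match Theorem~\ref{th:rees} exactly for each cancellation to land on the intended coordinate.
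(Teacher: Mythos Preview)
Your argument is correct: the computation $t((\mu,g,j))=(1,p_{i\mu}gp_{j\lambda},1)$ is right, and the three specialisations $(i,\lambda)=(1,1)$, $(1,\lambda)$, $(i,1)$ together with normalisation and non-singularity of $\P$ cleanly recover $g_1=g_2$, $j_1=j_2$, $\mu_1=\mu_2$. The paper does not supply its own proof of this lemma---it simply quotes it as Lemma~4.2 of \cite{ED_I}---so there is no in-paper argument to compare against; your proof is the natural one and would serve perfectly well as a self-contained replacement for the citation.
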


Let $t(x)$ be an $\LL_S$-term. By $[t](x)$ we denote the $\LL_S$-term which is obtained from $t(x)$ by omitting all occurrences of the inversion. For example, if $t(x)=((xsy\1)\1zx\1)\1$ then $[t](x)=xsyzx$. 

\begin{lemma}
\label{l:exists_2_non_dist_elems}
Suppose the sandwich-matrix $\P$ of a c.s. semigroup $S=(G,\P,\Lambda,I)$ has equal rows (columns) with numbers $i,j$ (respectively, $\lambda,\mu$). Then for the elements $s_1=(1,1,i)$, $s_2=(1,1,j)$ (resp. $s_1=(\lambda,1,1)$, $s_2=(\mu,1,1)$) and an arbitrary $\LL_S$-term $t(x)$ one of the following holds:
\begin{enumerate}
\item $t(s_1)=t(s_2)$;
\item $t(s_1)=(\nu,g,i)$, $t(s_2)=(\nu,g,j)$ for some $g\in G$, $\nu\in\Lambda$, if $[t](x)$ ends with the variable $x$ (resp. $t(s_1)=(\lambda,g,k)$, $t(s_2)=(\mu,g,k)$ for some $g\in G$, $k\in I$, if  $[t](x)$ begins with the variable $x$). In other words, the elements $t(s_1),t(s_2)$ are distinguished only by the second (respectively, first) index.
\end{enumerate}
\end{lemma}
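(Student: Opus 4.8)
The plan is to prove a slightly stronger statement by structural induction on the term $t(x)$, tracking not merely whether $t(s_1)=t(s_2)$ but \emph{how} the two values are related. For the equal-rows case (the equal-columns case being entirely dual) I would establish the following invariant for $s_1=(1,1,i)$, $s_2=(1,1,j)$: for every $\LL_S$-term $t(x)$, either \textbf{(i)} $[t](x)$ ends with a constant, and then $t(s_1)=t(s_2)$; or \textbf{(ii)} $[t](x)$ ends with the variable $x$, and then $t(s_1)=(\nu,g,i)$, $t(s_2)=(\nu,g,j)$ for a common $\nu\in\Lambda$ and $g\in G$. This dichotomy is exactly what is needed to close the induction, and it immediately yields the two alternatives in the statement (which alternative holds being decided by the rightmost leaf of $[t]$).

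The base cases are immediate: for $t(x)=x$ we are in case (ii) with $(\nu,g)=(1,1)$, and for a constant $t(x)=c$ we are in case (i). The inductive steps rest on three structural facts read off from the Rees representation: in a product $u\cdot v=(\mathrm{fst}(u),\,\mathrm{grp}(u)\,p_{\mathrm{snd}(u)\,\mathrm{fst}(v)}\,\mathrm{grp}(v),\,\mathrm{snd}(v))$ the second index is inherited from the right factor; the inversion formula $(\lambda,g,i)\1=(\lambda,p_{i\lambda}\1 g\1 p_{i\lambda}\1,i)$ preserves both indices; and the hypothesis that rows $i,j$ coincide means $p_{i\lambda}=p_{j\lambda}$ for every $\lambda\in\Lambda$. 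Because of this last fact, $i$ and $j$ produce \emph{identical} group contributions whenever either is used as a \emph{row} index of $\P$, so the only way the difference between $s_1$ and $s_2$ can survive into $t(s_1),t(s_2)$ is as the final second index.

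For the inversion step $t=(t_1)\1$ the invariant transports verbatim: $[t]=[t_1]$, and applying the inversion formula to the values supplied by the hypothesis for $t_1$ keeps them equal in case (i), while in case (ii) it replaces $p_{i\lambda}\1$ by $p_{j\lambda}\1=p_{i\lambda}\1$, so the group parts stay equal and only the second index splits into $i$ versus $j$. The product step $t=t_1\cdot t_2$ is the main obstacle, and the real reason the invariant must be stated in the two-case form: here the possibly differing second index of the \emph{left} factor $t_1$ enters the product through the sandwich entry $p_{\mathrm{snd}(t_1)\,\mathrm{fst}(t_2)}$. If $[t_1]$ ends with $x$ this entry is $p_{i,\mathrm{fst}(t_2)}$ for $s_1$ and $p_{j,\mathrm{fst}(t_2)}$ for $s_2$, which agree precisely by equality of rows $i,j$; if $[t_1]$ ends with a constant the two left second indices already coincide. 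In either situation the group components and the first index $\mathrm{fst}(t_1)$ of the product agree, while the final second index is that of $t_2$; hence the product lands in case (ii) exactly when $[t_2]$ ends with $x$ and in case (i) otherwise, matching the endpoint of $[t]=[t_1][t_2]$.

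Finally, the equal-columns statement follows by the mirror argument, exchanging the roles of the two indices: the first index of a product is inherited from the \emph{left} factor, inversion again preserves it, and in the entry $p_{\mathrm{snd}(u)\,\mathrm{fst}(v)}$ the varying quantity is now the \emph{column} index, so the hypothesis $p_{i\lambda}=p_{i\mu}$ for all $i$ (equal columns $\lambda,\mu$) is what neutralises the difference. One then obtains that the split survives only as the first index, and only when $[t]$ \emph{begins} with $x$.
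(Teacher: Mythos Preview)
Your proof is correct and follows essentially the same approach as the paper: structural induction on $t(x)$ with the base cases variable/constant, the inversion step, and the product step split according to the four combinations of the two factors. The only cosmetic differences are that the paper writes out the equal-\emph{columns} case (tracking whether $[t]$ \emph{begins} with $x$) while you write out the dual equal-\emph{rows} case, and you state the induction invariant in the sharper ``if and only if'' form (case (i) exactly when $[t]$ ends in a constant, case (ii) exactly when it ends in $x$), which the paper uses implicitly but does not spell out.
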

\begin{proof}
Assume that the $\lambda$-th and $\mu$-th columns of $\P$ are equal.(similarly, one can consider $\P$ with equal rows).

We prove this lemma by induction on the construction of a term $t(x)$. If $t(x)$ is a either constant or a variable, lemma obviously holds.

Suppose now $t(x)=(t^\pr(x))\1$, and the lemma holds for $t^\pr(x)$. If $t^\pr(s_1)=t^\pr(s_2)$, then $t(s_1)=t(s_2)$. If $[t(x)]$ begins with the variable $x$, we have
\[
t(s_1)=(\lambda,g,k)\1=(\lambda,p_{k\lambda}\1g\1p_{k\lambda}\1,k),
\]
\[
t(s_2)=(\mu,g,k)\1=(\mu,p_{k\mu}\1g\1p_{k\mu}\1,k).
\]
By the singularity of the matrix $\P$, we have $p_{k\lambda}=p_{k\mu}$, therefore the elements $t(s_1),t(s_2)$ are distinguished only by the first index.

Consider now an $\LL_S$-term $t(x)=t^\pr(x)t^{\pr\pr}(x)$. We have exactly four cases.

\begin{enumerate}
\item Neither of the terms $[t^{\pr}](x)$, $[t^{\pr\pr}](x)$ begins with the variable $x$. By the induction, we have $t^\pr(s_1)=t^\pr(s_2)$, $t^{\pr\pr}(s_1)=t^{\pr\pr}(s_2)$, and, finally, $t(s_1)=t(s_2)$.

\item Only $[t^{\pr\pr}](x)$ begins with the variable $x$. Here we have $t^\pr(s_1)=t^\pr(s_2)=(\nu,h,l)$, $t^{\pr\pr}(s_1)=(\lambda,g,k)$, $t^{\pr\pr}(s_2)=(\mu,g,k)$ for some $g,h\in G, \nu\in\Lambda$, $k,l\in I$.

Thus the values of $t(x)$ equal:
\[
t(s_1)=(\nu,h,l)(\lambda,g,k)=(\nu,hp_{l\lambda}g,k), 
\]
\[
t(s_2)=(\nu,h,l)(\mu,g,k)=(\nu,hp_{l\mu}g,k).
\]
Since $p_{l\lambda}=p_{l\mu}$, we have $t(s_1)=t(s_2)$.

\item Only $[t^{\pr}](x)$ begins with the variable $x$. Therefore, $t^{\pr}(s_1)=(\lambda,g,k)$, $t^{\pr}(s_2)=(\mu,g,k)$, $t^{\pr\pr}(s_1)=t^{\pr\pr}(s_2)=(\nu,h,l)$ for some $g,h\in G, \nu\in\Lambda$, $k,l\in I$. We have 
\[
t(s_1)=(\lambda,g,k)(\nu,h,l)=(\lambda,gp_{k\nu}h,l), 
\]
\[
t(s_2)=(\mu,g,k)(\nu,h,l)=(\mu,gp_{k\nu}h,l),
\]
and the elements $t(s_1),t(s_2)$ are distinguished only by the first index.

\item Both terms $[t^\pr](x)$, $[t^{\pr\pr}](x)$ begin with the variable $x$. By the induction, $t^\pr(s_1)=(\lambda,g,k)$, $t^\pr(s_2)=(\mu,g,k)$, $t^{\pr\pr}(s_1)=(\lambda,h,l)$, $t^{\pr\pr}(s_2)=(\mu,h,l)$ for some $g,h\in G$, $k,l\in I$. Therefore, 
\[
t(s_1)=(\lambda,g,k)(\lambda,h,l)=(\lambda,gp_{k\lambda}h,l),
\]
\[
t(s_2)=(\mu,g,k)(\mu,h,l)=(\mu,g_{k\mu}h,l).
\]
Since $p_{k\lambda}=p_{k\mu}$, the elements $t(s_1),t(s_2)$ are distinguished only by the first index.
\end{enumerate}
\end{proof}

\begin{lemma}
\label{l:singular->not_ED}
If the sandwich-matrix $\P$ is singular, a c.s. semigroup $S=(G,\P,\Lambda,I)$ is not an e.d. in the language $\LL_S$.
\end{lemma}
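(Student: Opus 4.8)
The plan is to reduce everything, via Theorem~\ref{th:about_M}, to showing that $\M_{sem}\subseteq S^4$ is not algebraic over $\LL_S$, and then to exploit the ``invisibility'' of one index that is recorded in Lemma~\ref{l:exists_2_non_dist_elems}. Assume the rows numbered $i,j$ of $\P$ coincide (the case of equal columns being the left--right dual, with ``second index'' replaced by ``first index'' and ``ends with $x$'' by ``begins with $x$''), and set $s_1=(1,1,i)$, $s_2=(1,1,j)$, so that $s_1\neq s_2$ since $i\neq j$. Suppose toward a contradiction that $\M_{sem}=\V_S(\Ss)$ for some system $\Ss$ in $x_1,x_2,x_3,x_4$. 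First I would single out the off-diagonal point $Q=(s_1,s_2,s_1,s_2)$, which is \emph{outside} $\M_{sem}$, together with its two neighbours $Q_1=(s_1,s_1,s_1,s_2)$ and $Q_2=(s_1,s_2,s_1,s_1)$, both of which lie in $\M_{sem}$, hence in $\V_S(\Ss)$. My aim is to show that every equation of $\Ss$ already holds at $Q$, forcing $Q\in\V_S(\Ss)=\M_{sem}$ — the desired contradiction.

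The main device is a one-variable specialization feeding into Lemma~\ref{l:exists_2_non_dist_elems}. Passing from $Q_1$ to $Q$ switches only $x_2$ from $s_1$ to $s_2$, while passing from $Q_2$ to $Q$ switches only $x_4$. So for any term $t(x_1,x_2,x_3,x_4)$ I would form the one-variable term $t(s_1,x,s_1,s_2)$ (respectively $t(s_1,s_2,s_1,x)$) and apply Lemma~\ref{l:exists_2_non_dist_elems} to it. Since deleting inversions commutes with substituting the constants $s_1,s_2$ for the frozen variables, the reduced word $[t(s_1,x,s_1,s_2)]$ ends with $x$ precisely when $[t]$ ends with the variable $x_2$. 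Consequently, if the rightmost atom of $[t]$ is not $x_2$, then $t(Q_1)=t(Q)$; and if it is $x_2$, then $t(Q_1)$ and $t(Q)$ agree except in the second index ($i$ versus $j$). The analogous statement holds for $x_4$ via $Q_2$.

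Now fix an equation $L=R$ of $\Ss$ and let $a,b$ be the rightmost atoms of $[L],[R]$. If neither $a$ nor $b$ is $x_2$, then $L(Q)=L(Q_1)=R(Q_1)=R(Q)$, using $Q_1\in\M_{sem}$; if neither is $x_4$, then symmetrically $L(Q)=L(Q_2)=R(Q_2)=R(Q)$ via $Q_2$. The only remaining possibility is $\{a,b\}=\{x_2,x_4\}$, say $a=x_2$ and $b=x_4$. Here Lemma~\ref{l:exists_2_non_dist_elems} gives $L(Q_1)=(\nu,g,i)$, while $R(Q_1)=R(Q)$ (since $b\neq x_2$) and $R(Q)$ differs from $R(Q_2)=(\nu^\pr,g^\pr,i)$ only in its second index, namely $R(Q)=(\nu^\pr,g^\pr,j)$. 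Evaluating $L=R$ at $Q_1\in\M_{sem}$ then yields $(\nu,g,i)=L(Q_1)=R(Q_1)=R(Q)=(\nu^\pr,g^\pr,j)$, whose second coordinates force $i=j$ — impossible. Thus this configuration never arises, every equation of $\Ss$ holds at $Q$, so $Q\in\V_S(\Ss)=\M_{sem}$, contradicting $Q\notin\M_{sem}$; by Theorem~\ref{th:about_M}, $S$ is not an e.d.

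I expect the genuine obstacle to be exactly this last configuration, where the two sides of an equation terminate in different active variables: one cannot read off directly that the equation holds at $Q$, and must instead combine the constraints coming from the two distinct neighbours $Q_1,Q_2$ and use that $s_1,s_2$ are distinguishable only in one index. Keeping the bookkeeping of first/second indices straight under products and inversions — precisely what Lemma~\ref{l:exists_2_non_dist_elems} was built to control — is the step that needs the most care, and once it is done for rows, the equal-columns case follows by the dual argument.
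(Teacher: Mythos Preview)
Your argument is correct and follows the same core strategy as the paper: assume the relevant ``or'' set is algebraic, pick a point outside it, and use Lemma~\ref{l:exists_2_non_dist_elems} to show that every equation of the defining system is already satisfied there. The differences are only in packaging. The paper works with the two-variable set $\M=\{(x,y)\mid x=(\lambda,1,1)\text{ or }y=(\lambda,1,1)\}$ rather than the four-variable $\M_{sem}$, and instead of showing every equation holds at the bad point it fixes one equation that fails there and argues it must also fail at some point of $\M$; its case split in the mixed case (one side beginning with $x$, the other with a constant) further subdivides according to whether that constant is an original constant of the term or arose from the substitution $y\mapsto(\mu,1,1)$. Your use of the two neighbours $Q_1,Q_2$ absorbs these subcases uniformly, and your resolution of the residual case $\{a,b\}=\{x_2,x_4\}$ via the forced equality $i=j$ is a little cleaner than the paper's first-index comparison. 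One small remark: Lemma~\ref{l:exists_2_non_dist_elems} is stated as a disjunction, but its proof actually gives the stronger statement you use, namely that alternative~(2) holds whenever $[t]$ ends with $x$; even without that strengthening your argument still goes through, since alternative~(1) for $L$ would give $L(Q)=L(Q_1)=R(Q_1)=R(Q)$ directly.
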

\begin{proof}
Assume the semigroup $S$ is an e.d. with a singular sandwich-matrix $\P$, and columns with numbers $\lambda,\mu$ of $\P$ are equal (similarly, one can consider a matrix $\P$ with equal rows). 

Consider a system of equations $\Ss(x,y)$ with the solution set $\M=\{(x,y)|x=(\lambda,1,1)\mbox{ or }y=(\lambda,1,1)\}$. Let $t(x,y)=s(x,y)\in \Ss$ be an equation that is not satisfied by the point $((\mu,1,1),(\mu,1,1))\notin\M$. 

Suppose neither of the terms $[t](x,(\mu,1,1))$, $[s](x,(\mu,1,1))$ begins with the variable $x$. Therefore, by Lemma~\ref{l:exists_2_non_dist_elems}, we have 
\[
t((\lambda,1,1),(\mu,1,1))=t((\mu,1,1),(\mu,1,1)),\; s((\lambda,1,1),(\mu,1,1))=s((\mu,1,1),(\mu,1,1)).
\]
Since 
\[
t((\lambda,1,1),(\mu,1,1))=s((\lambda,1,1),(\mu,1,1)),
\]
we obtain 
\[
t(\mu,1,1),(\mu,1,1))=s((\mu,1,1),(\mu,1,1)),
\]
which contradicts the choice of the equation $t(x,y)=s(x,y)$.

Assume now both terms $[t](x,(\mu,1,1))$, $[s](x,(\mu,1,1))$ begin with $x$. By Lemma~\ref{l:exists_2_non_dist_elems}, one can obtain 
\[
t((\lambda,1,1),(\mu,1,1))=(\lambda,g,k),\;t((\mu,1,1),(\mu,1,1))=(\mu,g,k).
\]
Since $((\lambda,1,1),(\mu,1,1))\in \M$, then $s((\lambda,1,1),(\mu,1,1)=(\lambda,g,k)$, and, according Lemma~\ref{l:exists_2_non_dist_elems}, we have 
\[
s((\mu,1,1),(\mu,1,1))=(\mu,g,k),
\]
It follows that
\[
t((\mu,1,1),(\mu,1,1))=s((\mu,1,1),(\mu,1,1)),
\]
which contradicts the choice of $t(x,y)=s(x,y)$.

Thus, we have the last case: the term $[t](x,(\mu,1,1))$ begins with $x$, and $[s](x,(\mu,1,1))$ begins with a constant $\c$. Therefore one of the following holds:
\begin{enumerate}
\item the constant $\c$ was obtained by the substitution of $(\mu,1,1)$ for the variable $y$, i.e. the term $[s](x,y)$ begins with $y$;
\item $[s](x,y)$ begins with $\c$.
\end{enumerate} 
Let us show that both options above are impossible.
\begin{enumerate}
\item Indeed, the equation $t(x,y)=s(x,y)$ is not satisfied by the point $((\lambda,1,1),(\mu,1,1))\in\M$, since the element $t((\lambda,1,1),(\mu,1,1))$ has the first index $\lambda$, whereas  the first index of $s((\lambda,1,1),(\mu,1,1))$ is $\mu$.
\item Suppose $\c=(\nu,g,k)$, so either $t((\lambda,1,1),(\lambda,1,1))\neq s((\lambda,1,1),(\lambda,1,1))$ (if $\nu\neq\lambda$) or $t((\mu,1,1),(\lambda,1,1))\neq s((\mu,1,1),(\lambda,1,1))$ (if $\nu\neq\mu$), since the values of the terms $t(x,y), s(x,y)$ have different first indexes. 
\end{enumerate}
\end{proof}

It is directly checked that the subset
\[
\Gamma=\{(1,g,1)|g\in G\}
\]
of a c.s. semigroup $S=(G,\P,\Lambda,I)$ is a group isomorphic to $G$. Since the sandwich-matrix  $\P$ is normalised, $(1,1,1)$ is the identity of $\Gamma$.

Suppose $x,y\in\Gamma$. Then we have the following identities:
\begin{eqnarray}
\label{eq:leech1}
(\lambda,g,i)x=(\lambda,g,1)x=(\lambda,g,1)x(1,1,1),\\
\label{eq:leech2}
x(\lambda,g,i)=x(1,g,i)=(1,1,1)x(1,g,i)\\
\label{eq:leech3}
((\lambda,g,1)x(1,h,j))\1=(\lambda,p_{j\lambda}\1 h\1,1)x\1(1,g\1 p_{j\lambda}\1,j)\\
\label{eq:leech4}
((\lambda,g,i)x)\1=(\lambda,1,1)x\1(1,g\1,1),\\
\label{eq:leech5}
(x(\lambda,g,i))\1=(1,g\1,1)x\1 (1,1,i),\\
\label{eq:leech6}
x(\lambda,g,i)y=x(1,g,1)y,\\
\label{eq:leech7}
(x(\lambda,g,i)y)\1 =y\1 (1,g\1,1)x\1.
\end{eqnarray}

The proof of the equalities~(\ref{eq:leech1},~\ref{eq:leech2}) is straightforward.

Let us prove~(\ref{eq:leech3}). Suppose $x=(1,g_x,1)$, then 
\begin{multline*}
((\lambda,g,1)(1,g_x,1)(1,h,j))\1=(\lambda,gg_xh,j)\1=(\lambda,p_{j\lambda}\1 (gg_xh)\1 p_{j\lambda}\1,j)\\
=(\lambda,p_{j\lambda}\1 h\1 g_x\1 g\1 p_{j\lambda}\1,j).
\end{multline*}
On the other hand,
\begin{multline*}
(\lambda,p_{j\lambda}\1 h\1,1)(1,g_x,1)\1(1,g\1 p_{j\lambda}\1,1)=(\lambda,p_{j\lambda}\1 h\1,1)(1,g_x\1,1)(1,g\1 p_{j\lambda}\1,1)\\
=(\lambda,p_{j\lambda}\1 h\1 g_x\1 g\1 p_{j\lambda}\1,j).
\end{multline*}

The equalities~(\ref{eq:leech4},~\ref{eq:leech5}) immediately follows from ~(\ref{eq:leech1},~\ref{eq:leech2},~\ref{eq:leech3}). Let us prove~(\ref{eq:leech6},~\ref{eq:leech7}). Suppose $x=(1,g_x,1)$, $y=(1,g_y,1)$, then
\[
x(\lambda,g,i)y=(1,g_x,1)(\lambda,g,i)(1,g_y,1)=(1,g_xgg_y,1),
\]
\[
(x(\lambda,g,i)y)\1=(1,g_xgg_y,1)\1=(1,(g_xgg_y)\1,1).
\]
Conversely,
\[
x(1,g,1)y=(1,g_x,1)(1,g,1)(1,g_y,1)=(1,g_xgg_y,1),
\]
\[
y\1(1,g\1,1)x\1 =(1,g_y\1,1)(1,g\1,1)(1,g_x\1,1)=(1,g_y\1 g\1 g_x\1,1)=(1,(g_xgg_y)\1,1),
\]
which proves~(\ref{eq:leech6},~\ref{eq:leech7}).

\begin{lemma}
\label{l:S-eq_dom->G-eq_dom}
If a c.s. semigroup $S=(G,\P,\Lambda,I)$ is an e.d. in the language $\LL_S$, then the group $G$ is an e.d. in the language $\LL_G$. 
\end{lemma}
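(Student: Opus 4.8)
The plan is to invoke the two criteria already at hand. By Theorem~\ref{th:about_M} the hypothesis furnishes a system $\Ss$ over $\LL_S$ in the variables $x_1,x_2,x_3,x_4$ with $\V_S(\Ss)=\M_{sem}$, and by Theorem~\ref{th:criterion_for_groups} it suffices to produce a system over $\LL_G$ in two variables $u,v$ whose solution set is $\M_{gr}$. The bridge is the retract $\Gamma=\{(1,g,1)\mid g\in G\}\cong G$: reading $u,v\in G$ as the $\Gamma$-elements $(1,u,1),(1,v,1)$ and filling the other two coordinates with the identity $(1,1,1)$ of $\Gamma$, the quadruple $\bigl((1,u,1),(1,1,1),(1,v,1),(1,1,1)\bigr)$ lies in $\M_{sem}$ precisely when $u=1$ or $v=1$, i.e. when $(u,v)\in\M_{gr}$. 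So I would substitute $x_2=x_4=(1,1,1)$ (legal constants of $\LL_S$) and $x_1=(1,u,1)$, $x_3=(1,v,1)$ into each equation of $\Ss$, and then rewrite the resulting identities of $S$ as genuine $\LL_G$-equations over $G$.

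The technical heart is a translation lemma, proved by induction on the construction of a term: for every $\LL_S$-term $t(x_1,x_3)$ there are indices $\alpha_t\in\Lambda$, $\beta_t\in I$ \emph{not depending on} $u,v$ and an $\LL_G$-term $W_t(u,v)$ such that $t\bigl((1,u,1),(1,v,1)\bigr)=(\alpha_t,W_t(u,v),\beta_t)$ for all $u,v\in G$. The base cases are immediate, and the inductive step uses only the two Rees formulas: for a product, $(\alpha_{t'},W_{t'},\beta_{t'})(\alpha_{t''},W_{t''},\beta_{t''})=(\alpha_{t'},W_{t'}\,p_{\beta_{t'}\alpha_{t''}}\,W_{t''},\beta_{t''})$, and for an inversion, $(\alpha_{t'},W_{t'},\beta_{t'})\1=(\alpha_{t'},p_{\beta_{t'}\alpha_{t'}}\1 W_{t'}\1 p_{\beta_{t'}\alpha_{t'}}\1,\beta_{t'})$; in both cases the indices are inherited unchanged and the sandwich entries $p_{\cdot\cdot}$ enter $W_t$ merely as constants of $G$, so $W_t$ is indeed a legal $\LL_G$-term. (The identities~(\ref{eq:leech1})--(\ref{eq:leech7}) are the special cases one needs.)

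With the translation in hand I would form the $\LL_G$-system $\Ss_G=\{\,W_t(u,v)=W_s(u,v)\mid (t=s)\in\Ss\,\}$ and claim $\V_G(\Ss_G)=\M_{gr}$. For one inclusion, if $(u,v)\in\M_{gr}$ then the associated quadruple lies in $\M_{sem}=\V_S(\Ss)$, so every equation $t=s$ holds at it; comparing Rees coordinates forces $\alpha_t=\alpha_s$, $\beta_t=\beta_s$ and $W_t(u,v)=W_s(u,v)$, whence $(u,v)\in\V_G(\Ss_G)$. For the reverse inclusion the crucial observation is that the index equalities $\alpha_t=\alpha_s$, $\beta_t=\beta_s$ are independent of $u,v$: since $(1,1)\in\M_{gr}$ is a witness, the previous step already shows these equalities hold for \emph{every} equation of $\Ss$. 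Hence for arbitrary $(u,v)$ the $S$-equality $t\bigl((1,u,1),(1,v,1)\bigr)=s\bigl((1,u,1),(1,v,1)\bigr)$ is equivalent to the $G$-equality $W_t(u,v)=W_s(u,v)$, so $\V_G(\Ss_G)$ consists exactly of those $(u,v)$ whose quadruple solves $\Ss$, i.e. lies in $\M_{sem}$ — which means $u=1$ or $v=1$. Theorem~\ref{th:criterion_for_groups} then yields that $G$ is an e.d.

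The main obstacle is bookkeeping rather than genuine difficulty: one must be sure the indices $\alpha_t,\beta_t$ are truly independent of the group values $u,v$ (so that each $\LL_S$-equation collapses to a single group word rather than a case split), and that no sandwich entry $p_{i\lambda}$ is ever forced to play the role of a variable. Both are secured by the induction above; the normalisation of $\P$ plays no essential role here beyond making $(1,1,1)$ the identity of $\Gamma$.
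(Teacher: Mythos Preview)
Your argument is correct and follows the same skeleton as the paper's proof: pass from the hypothesis to a system over $\LL_S$ defining an ``or'' set, restrict the variables to the subgroup $\Gamma=\{(1,g,1)\mid g\in G\}$, translate each $\LL_S$-equation into an $\LL_G$-equation, and invoke Theorem~\ref{th:criterion_for_groups}. The paper starts one step further along --- it uses the e.d.\ hypothesis directly to say that the two-variable set $\M=\{(x,y)\mid x=(1,1,1)\text{ or }y=(1,1,1)\}\subseteq S^2$ is algebraic, rather than going through $\M_{sem}$ and substituting $x_2=x_4=(1,1,1)$ --- but this is a cosmetic difference.

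The one genuine divergence is in how the translation to $\LL_G$ is carried out. The paper rewrites the system in stages using the identities~(\ref{eq:leech1})--(\ref{eq:leech7}): first push all inversions down to variables, then use~(\ref{eq:leech6}) to replace interior constants by elements of $\Gamma$, and finally strip the leading and trailing indices by a consistency argument. Your single inductive ``translation lemma'' (tracking the fixed outer indices $\alpha_t,\beta_t$ and the group word $W_t$ simultaneously) does the same job more directly and makes the crucial point --- that the outer indices are independent of $u,v$, so the witness $(1,1)$ forces $\alpha_t=\alpha_s$, $\beta_t=\beta_s$ for every equation --- completely explicit. Either route is fine; yours is arguably tidier.
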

\begin{proof}
Since $S$ is an e.d., the set $\M=\{(x,y)|x=(1,1,1)\mbox{ or }y=(1,1,1)\}\subseteq S^2$ is algebraic. In other words, there exists a system $\Ss$ with $\V_S(\Ss)=\M$. 

Below we define a system $\Ss^\pr$ which is equivalent to $\Ss$ over the group $\Gamma$, and all constants of $\Ss^\pr$ belong to $\Gamma$.

Applying~(\ref{eq:leech3},~\ref{eq:leech4},~\ref{eq:leech5},~\ref{eq:leech7}) to the system $\Ss$, one can obtain a system $\Ss_1$ such that:
\begin{enumerate}
\item $\Ss_1$ is equivalent to $\Ss$ over $\Gamma$;
\item the inversion ${}^{-1}$ in any equation from $\Ss_1$ is applied only to variables: $x\1, y\1$. 
\end{enumerate}  

By~(\ref{eq:leech6}) the system $\Ss_1$ is reduced to $\Ss_2$ such that any constant $\c$ in any $t(x,y)=s(x,y)\in\Ss_2$ belongs to $\Gamma$ if $\c$ is neither first nor last symbol in the terms $t(x,y),s(x,y)$. Obviously, $\Ss_2$ is equivalent to $\Ss$ over the group $\Gamma$. 

Consider an equation $(\lambda,g,i)t^\pr(x,y)=s(x,y)\in\Ss_2$, where the left part of this equation begins with a constant $\c=(\lambda,g,i)\notin\Gamma$ (similarly, one can consider an equation that has a right part ending by a constant $\c\notin\Gamma$). According to~(\ref{eq:leech1}), one can put $i=1$. Since the system $\Ss_2$ is consistent over $\Gamma$, the term $s(x,y)$ begins with either a constant $(\lambda,h,j)$ or a variable with $\lambda=1$.

Suppose $s(x,y)=(\lambda,h,j)s^\pr(x,y)$ (similarly, one can consider $s(x,y)$ which begins with a variable). It is directly checked that the equation 
\[
(\lambda,g,1)t^\pr(x,y)=(\lambda,h,j)s^\pr(x,y)
\]
is equivalent to
\[
(1,g,1)t^\pr(x,y)=(1,h,1)s^\pr(x,y).
\]
Thus, any equation of $\Ss_2$ is equivalent to a system $\Ss^\pr$ whose constants belong to the subgroup  $\Gamma$. Therefore, $\V_\Gamma(\Ss^\pr)=\{(x,y)|x=(1,1,1)\mbox{ or }y=(1,1,1)\}\subseteq \Gamma^2$, and, by Theorem~\ref{th:criterion_for_groups}, the group $\Gamma$ is an e.d. in the language $\LL_\Gamma$. Finally, the isomorphism between the groups $\Gamma,G$ proves the lemma.
\end{proof}

Below the element $(1,1,1)\in\Gamma$ is denoted by $\one$ for shortness.

\begin{lemma}
\label{l:reduce_x=y}
Suppose the sandwich-matrix $\P$ of a c.s. semigroup $S=(G,\P,\Lambda,I)$ is nonsingular. Then the equation $x=y$ is equivalent to a system of the form $\Ss_{=}(x,y)=\{t_i(x,y)=\one |i\in\mathcal{I}\}$.
\end{lemma}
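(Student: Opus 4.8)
The plan is to reduce the equality $x=y$ to a family of equalities taking place inside the subgroup $\Gamma=\{(1,g,1)\mid g\in G\}$, and then to rewrite each such equality in the required form $t(x,y)=\one$ using that $\Gamma$ is a group with identity $\one$. The starting observation is that for any $i\in I$, $\lambda\in\Lambda$ and any $s\in S$ the value $(1,1,i)s(\lambda,1,1)$ lies in $\Gamma$: if $s=(\nu,g,k)$ then $(1,1,i)(\nu,g,k)(\lambda,1,1)=(1,p_{i\nu}gp_{k\lambda},1)$. Thus each $\LL_S$-term $(1,1,i)x(\lambda,1,1)$ maps every point of $S$ into $\Gamma$, and these are exactly the separating terms of the form~(\ref{eq:dist_term}) supplied by Lemma~\ref{l:exists_dist_term}.

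For each pair $(i,\lambda)\in I\times\Lambda$ I would set
\[
t_{i\lambda}(x,y)=\bigl((1,1,i)x(\lambda,1,1)\bigr)\bigl((1,1,i)y(\lambda,1,1)\bigr)\1 ,
\]
which is a legitimate $\LL_S$-term, and take $\Ss_{=}(x,y)=\{t_{i\lambda}(x,y)=\one\mid i\in I,\ \lambda\in\Lambda\}$ (so $\mathcal I=I\times\Lambda$; these index sets need not be finite, but arbitrary systems are permitted). The key point is that upon substituting any $s_1,s_2\in S$ the two factors $u=(1,1,i)s_1(\lambda,1,1)$ and $v=(1,1,i)s_2(\lambda,1,1)$ both lie in $\Gamma$, and by the normalisation $p_{11}=1$ the inversion $\1$ of $S$ restricted to $\Gamma$ coincides with the group inverse there (one checks $(1,h,1)\1=(1,h\1,1)$). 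Hence $t_{i\lambda}(s_1,s_2)=uv\1$ computed inside $\Gamma$, so $t_{i\lambda}(s_1,s_2)=\one$ if and only if $u=v$, that is, if and only if $(1,1,i)s_1(\lambda,1,1)=(1,1,i)s_2(\lambda,1,1)$.

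It then remains to prove that $\Ss_{=}$ is equivalent to $x=y$. The forward inclusion is immediate: if $s_1=s_2$ then $u=v$ for every $(i,\lambda)$, so every equation $t_{i\lambda}=\one$ holds. For the converse I would argue by contraposition: if $s_1\neq s_2$, then since $\P$ is nonsingular Lemma~\ref{l:exists_dist_term} furnishes some $i\in I$, $\lambda\in\Lambda$ for which the term $(1,1,i)x(\lambda,1,1)$ separates $s_1,s_2$, i.e. $(1,1,i)s_1(\lambda,1,1)\neq(1,1,i)s_2(\lambda,1,1)$; by the previous paragraph the corresponding equation $t_{i\lambda}(s_1,s_2)=\one$ fails, whence $(s_1,s_2)\notin\V_S(\Ss_{=})$. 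Therefore $\V_S(\Ss_{=})=\{(s_1,s_2)\mid s_1=s_2\}$.

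The main obstacle is the backward direction, and it is exactly where nonsingularity is used: without it there could be genuinely distinct elements that no term of the form~(\ref{eq:dist_term}) distinguishes — the phenomenon quantified in Lemma~\ref{l:exists_2_non_dist_elems} — and then $\Ss_{=}$ would have a strictly larger solution set than the diagonal. The only routine verifications are the two index computations $(1,1,i)(\nu,g,k)(\lambda,1,1)=(1,p_{i\nu}gp_{k\lambda},1)$ and $(1,h,1)\1=(1,h\1,1)$, both following directly from the Rees multiplication of Theorem~\ref{th:rees} and the normalisation of $\P$.
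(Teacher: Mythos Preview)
Your proof is correct and follows essentially the same approach as the paper: both build $\Ss_{=}$ from equations of the form $\bigl((1,1,i)x(\lambda,1,1)\bigr)\bigl((1,1,i)y(\lambda,1,1)\bigr)^{-1}=\one$, use that each factor lands in $\Gamma$ so that the equation holds iff the two factors agree, and invoke Lemma~\ref{l:exists_dist_term} for the converse. The only difference is cosmetic: the paper indexes its system by pairs $(s,s')\notin\M_{=}$ (choosing one separating term per bad pair, plus one extra equation), whereas you index directly by $(i,\lambda)\in I\times\Lambda$, which is a little cleaner and gives a smaller index set when $I,\Lambda$ are small but $S$ is large.
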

\begin{proof}
Denote by $\M_{=}$ the solution set of $x=y$, i.e. $\M_{=}=\{(x,x)|x\in S\}$.

By Lemma~\ref{l:exists_dist_term}, for any $(s,s^\pr)\notin\M_{=}$ there exists a term $t_{(s,s^\pr)}(x)$ separating $s,s^\pr$.

Put
\[\Ss_{=}(x,y)=\{\one x(\one y\one )\1=\one \}\bigcup_{(s,s^\pr)\notin\M_{=}}\{t_{(s,s^\pr)}(x)(t_{(s,s^\pr)}(y))\1=\one \}.
\]

Let us show that $\M_=$ is the solution set of $\Ss_{=}(x,y)$, and $P=((\mu,g,j),(\mu,g,j))\in\M_=$. 

Since  
\[\one (\mu,g,j)(\one (\mu,g,j)\one )\1=(1,g,j)(1,g\1,1)=\one ,
\]
the point $P$ satisfies the first equation of $\Ss_=$.
For the other equations of $\Ss_=$ we have (apply the  formula~(\ref{eq:dist_term}) to the term $t_{(s,s^\pr)}(x)$)
\begin{multline*}
t_{(s,s^\pr)}((\mu,g,j))(t_{(s,s^\pr)}((\mu,g,j)))\1=(1,1,i)(\mu,g,j)(\lambda,1,1)((1,1,i)(\mu,g,j)(\lambda,1,1))\1=\\
(1,p_{i\mu}gp_{j\lambda},1)((1,p_{i\mu}gp_{j\lambda},1))\1=(1,p_{i\mu}gp_{j\lambda},1)(1,(p_{i\mu}gp_{j\lambda})\1,1)=\one .
\end{multline*}

Thus, any point of $\M_=$ belongs to the solution set of $\Ss_=$. Conversely, if $(s,s^\pr)\notin\M_=$, Let $(1,h,1)$, $(1,h^\pr,1)$ denote the values of $t_{(s,s^\pr)}(x)$ at $s,s^\pr$, respectively. We obtain
\[
t_{(s,s^\pr)}(s)(t_{(s,s^\pr)}(s^\pr))\1=
(1,h,1)((1,h^\pr,1))\1=(1,h(h^\pr)\1,1)\neq\one ,
\]
since $h\neq h^\pr$. Thus, the equation $t_{(s,s^\pr)}(x)(t_{(s,s^\pr)}(y))\1=\one \in\Ss_=$ is not satisfied by the point $(s,s^\pr)$.

Finally, we obtain $\V_S(\Ss_=)=\M_=$, therefore the system $\Ss_=(x,y)$ is equivalent to the equation $x=y$.
\end{proof}

\begin{lemma}
\label{l:x=1_y=1_implies_domain}
If the set $\M=\{(x,y)|x=(1,1,1)\mbox{ or } y=(1,1,1)\}$ is algebraic over a c.s. semigroup  $S=(G,\P,\Lambda,I)$ with a nonsingular $\P$, then $S$ is an e.d. in the language $\LL_S$.
\end{lemma}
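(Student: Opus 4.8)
The plan is to verify the criterion of Theorem~\ref{th:about_M}: it suffices to show that the set $\M_{sem}=\{(x_1,x_2,x_3,x_4)\mid x_1=x_2\text{ or }x_3=x_4\}$ is algebraic over $S$. The hypothesis hands us a system $\Ss_\M(u,v)$ in two variables with $\V_S(\Ss_\M)=\M=\{(u,v)\mid u=\one\text{ or }v=\one\}$, and, since $\P$ is nonsingular, Lemma~\ref{l:reduce_x=y} supplies a single family of terms $\{t_i\mid i\in\mathcal I\}$ such that an equation between two variables is equivalent to the system asserting $t_i=\one$ for all $i$. Concretely, $x_1=x_2$ is equivalent to $\{t_i(x_1,x_2)=\one\mid i\in\mathcal I\}$ and $x_3=x_4$ to $\{t_j(x_3,x_4)=\one\mid j\in\mathcal I\}$.

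The key construction is to substitute terms into the algebraic set $\M$. For each pair $i,j\in\mathcal I$, let $\Ss_\M[t_i,t_j]$ denote the system obtained from $\Ss_\M(u,v)$ by replacing every occurrence of $u$ by the term $t_i(x_1,x_2)$ and every occurrence of $v$ by $t_j(x_3,x_4)$; this is again an $\LL_S$-system, now in the variables $x_1,x_2,x_3,x_4$. A point $(a_1,a_2,a_3,a_4)$ satisfies $\Ss_\M[t_i,t_j]$ precisely when $(t_i(a_1,a_2),t_j(a_3,a_4))\in\M$, i.e. when $t_i(a_1,a_2)=\one$ or $t_j(a_3,a_4)=\one$. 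I then form the single (possibly infinite) system $\Ss_{sem}=\bigcup_{i,j\in\mathcal I}\Ss_\M[t_i,t_j]$ and claim $\V_S(\Ss_{sem})=\M_{sem}$.

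The verification is the distributivity $(\bigwedge_i A_i)\vee(\bigwedge_j B_j)=\bigwedge_{i,j}(A_i\vee B_j)$ made concrete. If $a_1=a_2$, then Lemma~\ref{l:reduce_x=y} forces $t_i(a_1,a_2)=\one$ for every $i$, so every subsystem $\Ss_\M[t_i,t_j]$ is satisfied; symmetrically if $a_3=a_4$. This gives $\M_{sem}\subseteq\V_S(\Ss_{sem})$. Conversely, if $a_1\neq a_2$ and $a_3\neq a_4$, then by Lemma~\ref{l:reduce_x=y} some $t_{i_0}(a_1,a_2)\neq\one$ and some $t_{j_0}(a_3,a_4)\neq\one$, so the subsystem $\Ss_\M[t_{i_0},t_{j_0}]$ fails and the point lies outside $\V_S(\Ss_{sem})$; this yields the reverse inclusion. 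Hence $\M_{sem}$ is algebraic, and Theorem~\ref{th:about_M} concludes that $S$ is an e.d.

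I expect the only delicate point to be bookkeeping rather than mathematics: one must check that substituting the terms $t_i(x_1,x_2)$ and $t_j(x_3,x_4)$ into $\Ss_\M$ really produces legitimate $\LL_S$-equations, and that a point satisfies the substituted system exactly when the substituted values satisfy $\Ss_\M$—both immediate, since substitution of a term into a term is again a term. The conceptual heart is simply that the two-variable set $\M$ already encodes one binary disjunction, and intersecting over all index pairs $(i,j)$ upgrades it to the disjunction of the two full equalities.
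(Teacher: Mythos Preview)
Your proof is correct and follows essentially the same route as the paper: reduce $x_1=x_2$ and $x_3=x_4$ to the families $\{t_i=\one\}$ via Lemma~\ref{l:reduce_x=y}, substitute the terms $t_i(x_1,x_2)$ and $t_j(x_3,x_4)$ into the system defining $\M$, take the union over all pairs $(i,j)$, and verify $\V_S=\M_{sem}$ by the distributivity $(\bigwedge_i A_i)\vee(\bigwedge_j B_j)=\bigwedge_{i,j}(A_i\vee B_j)$, concluding with Theorem~\ref{th:about_M}. The paper's argument is identical in structure and content; your additional remark on the legitimacy of term substitution is a harmless clarification.
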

\begin{proof}
By Theorem~\ref{th:about_M} it suffices to prove the existence of a system  $\Ss(x_1,x_2,x_3,x_4)$ with the solution set $\M_{sem}$.

By Lemma~\ref{l:reduce_x=y}, the equations $x_1=x_2$, $x_3=x_4$ are respectively equivalent to the systems $\Ss_=(x_1,x_2)=\{t_i(x_1,x_2)=\one |i\in\mathcal{I}\}$, $\Ss_=(x_3,x_4)=\{t_i(x_3,x_4)=\one |i\in\mathcal{I}\}$.

Applying the distributivity law for algebraic sets, we obtain 
\[
\M_{sem}=\bigcap_{i,j\in\mathcal{I}}\V_S(t_i(x_1,x_2)=\one) \cup \V_S( t_j(x_3,x_4)=\one ).
\] 

By the condition of the lemma, there exists a system $\Ss^\pr(x,y)$ with the solution $\M$. Hence, the set $\V_S(t_i(x_1,x_2)=\one \cup\V_S(t_j(x_3,x_4)=\one)$ coincides with the solution set of the system $\Ss^\pr(t_i(x_1,x_2),t_j(x_3,x_4))$, and
\[\M_{sem}=\bigcap_{i,j\in\mathcal{I}}\V_S(\Ss^\pr(t_i(x_1,x_2),t_j(x_3,x_4))).\]

Finally, the set $\M_{sem}$ equals to the solution set of the system 
\[\Ss(x_1,x_2,x_3,x_4)=\bigcup_{i,j\in\mathcal{I}}\Ss^\pr(t_i(x_1,x_2),t_j(x_3,x_4)).\]
Thus, $\M_{sem}$ is algebraic.
\end{proof}

\begin{lemma}
\label{l:x=1_y=1_is_algebraic}
Suppose the sandwich-matrix $\P$ of a c.s. semigroup $S=(G,\P,\Lambda,I)$ is nonsingular, the group $G$ is an e.d. in the group language $\LL_G$. Then the set $\M=\{(x,y)|x=(1,1,1)\mbox{ or } y=(1,1,1)\}$ is algebraic over $S$ in the language $\LL_S$.
\end{lemma}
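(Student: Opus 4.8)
The plan is to transfer the equational-domain property from the structural group to $S$ through the separating terms furnished by nonsingularity, and to glue the two disjuncts together with the group system.

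First I would rewrite each disjunct as a conjunction of $\Gamma$-valued equations. Since $\P$ is nonsingular, Lemma~\ref{l:reduce_x=y} makes $x=y$ equivalent to a system $\Ss_=(x,y)=\{t_i(x,y)=\one\mid i\in\mathcal I\}$. Substituting the constant $\one$ for $y$, the equation $x=\one$ becomes equivalent to $\{a_i(x)=\one\mid i\in\mathcal I\}$, where $a_i(x):=t_i(x,\one)$. Every equation of $\Ss_=$ has the shape ``(separating term)$=\one$'', and by~(\ref{eq:dist_term}) the separating terms $(1,1,i)x(\lambda,1,1)$ take values in the subgroup $\Gamma$; together with $t_{(s,s^\pr)}(\one)=\one$ this shows that each $a_i(x)\in\Gamma$ for every $x\in S$. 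Hence, viewing all sets inside $S^2$ (with the other variable unconstrained), we get $\{(x,y)\mid x=\one\}=\bigcap_i\V_S(a_i(x)=\one)$ and symmetrically $\{(x,y)\mid y=\one\}=\bigcap_j\V_S(a_j(y)=\one)$.

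Next I would bring in the hypothesis on $G$. As $G$ is an e.d. in $\LL_G$, Theorem~\ref{th:criterion_for_groups} and the isomorphism $G\cong\Gamma$ yield a system $\Ss_\Gamma(u,v)$ over $\Gamma$ with solution set $\{(u,v)\in\Gamma^2\mid u=\one\text{ or }v=\one\}$. For each pair $i,j$ I substitute the $\Gamma$-valued terms $a_i(x),a_j(y)$ for $u,v$, obtaining an $\LL_S$-system $\Ss_\Gamma(a_i(x),a_j(y))$; because $a_i(x),a_j(y)$ always lie in $\Gamma$, a point $(x,y)$ satisfies it precisely when $a_i(x)=\one$ or $a_j(y)=\one$. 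Applying the distributivity law for algebraic sets used in Lemma~\ref{l:x=1_y=1_implies_domain},
\[
\M=\Big(\bigcap_i\V_S(a_i(x)=\one)\Big)\cup\Big(\bigcap_j\V_S(a_j(y)=\one)\Big)=\bigcap_{i,j}\V_S\big(\Ss_\Gamma(a_i(x),a_j(y))\big),
\]
so $\M$ is the solution set of the single system $\bigcup_{i,j}\Ss_\Gamma(a_i(x),a_j(y))$ and is therefore algebraic.

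The point demanding the most care is the faithful reading of the group system $\Ss_\Gamma$ as an $\LL_S$-system. This relies on two facts: that $a_i(x)\in\Gamma$ for \emph{all} $x\in S$ (not only for $x\in\Gamma$), and that on $\Gamma$ the semigroup inversion agrees with the group inversion of $\Gamma$ --- which holds because the normalisation $p_{11}=1$ gives $(1,g,1)\1=(1,g\1,1)$. Once these are in place, substituting $\Gamma$-valued terms into a group equation and interpreting the result over $S$ leaves its truth value unchanged, and the chain of equalities above is justified.
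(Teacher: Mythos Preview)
Your argument is correct, and it takes a genuinely different route from the paper's proof. The paper builds the system for $\M$ explicitly: for every $g\in G$ and every pair $(s,s')\notin\M$ it writes down a commutator-type equation
\[
t_s(x)(1,g\1,1)t_{s'}(y)(1,g,1)(t_s(x))\1(1,g\1,1)(t_{s'}(y))\1(1,g,1)=\one,
\]
then checks directly that any $(s,s')\notin\M$ satisfying all of these would produce zero-divisors in $G$, contradicting Theorem~\ref{th:zero_divisors}. In other words, the paper reaches into the specific zero-divisor characterisation of group e.d.'s and hand-builds a witness.

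You instead treat the group hypothesis as a black box via Theorem~\ref{th:criterion_for_groups}, pull back the abstract $\Gamma$-system $\Ss_\Gamma$ along the $\Gamma$-valued separating terms $a_i$ coming from Lemma~\ref{l:reduce_x=y}, and finish with the same distributivity trick the paper uses in Lemma~\ref{l:x=1_y=1_implies_domain}. This is shorter and conceptually cleaner, since it never touches commutators; it also makes transparent why Lemmas~\ref{l:x=1_y=1_implies_domain} and~\ref{l:x=1_y=1_is_algebraic} are really the same argument run at two levels. The paper's version, on the other hand, yields an explicit description of the defining system and shows concretely how the group and semigroup pictures match up. Your closing paragraph correctly isolates the one nontrivial verification your approach needs: that the $a_i$ land in $\Gamma$ for \emph{all} $x\in S$ (which follows from the shape~(\ref{eq:dist_term}) of the separating terms and the normalisation $p_{1\lambda}=p_{i1}=1$), and that the $\LL_S$-operations restricted to $\Gamma$ coincide with its group operations.
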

\begin{proof}
Let $\one \neq s\in S$. By Lemma~\ref{l:exists_dist_term}, there exists a term of the form~(\ref{eq:dist_term}) separating $\one ,s$. This term is denoted below by $t_s(x)$. 

Put
\begin{multline*}
\Ss^\pr(x,y)=\bigcup_{g\in G}\{\one x(1,g\1,1)y(1,g,1)(\one x\one )\1(1,g\1,1)(\one y\one )\1(1,g,1)=\one \}\\
\bigcup_{(s,s^\pr)\notin\M,g\in G}\{t_s(x)(1,g\1,1)t_{s^\pr}(y)(1,g,1)(t_s(x))\1(1,g\1,1)(t_{s^\pr}(y))\1(1,g,1)=\one \}.
\end{multline*}

Let us show that $\V_S(\Ss^\pr(x,y))=\M$. 

Firstly, we prove $\M\subseteq\V_S(\Ss^\pr)$.

Let $P=((\mu,h,j),\one )\in\M$ (similarly, one can consider the point $(\one ,(\mu,h,j))\in\M$). We have
\begin{multline*}
\one (\mu,h,j)(1,g\1,1)\one (1,g,1)(\one (\mu,h,j)\one )\1(1,g\1,1)(\one \one \one )\1(1,g,1)=\\
\one (\mu,h,j)\one ((1,h\1,1))(1,g\1,1)(\one )(1,g,1)=\\
(1,h,1)(1,h\1,1)(1,g\1,1)(1,g,1)=\one .
\end{multline*}
In other words, the point $P$ satisfies the first group of equations of $\Ss^\pr$.

Since the terms $t_s(x),t_{s^\pr}(x)$ are of the form~(\ref{eq:dist_term}), we have $t_s(\one )=t_{s^\pr}(\one )=\one $. Let $t_s((\mu,h,j))=(1,f,1)$, $t_{s^\pr}((\mu,h,j))=(1,f^\pr,1)$.

Computing
\begin{multline*}
t_s((\mu,h,j))(1,g\1,1)t_{s^\pr}(\one )(1,g,1)(t_s((\mu,h,j)))\1(1,g\1,1)(t_{s^\pr}(\one ))\1(1,g,1)=\\
(1,f,1)(1,g\1,1)\one (1,g,1)(1,f\1,1)(1,g\1,1)\one (1,g,1)=\one
\end{multline*}
we see that the point $P$ satisfies the second group of equations of $\Ss^\pr$. Finally, we obtained $P\in\V_S(\Ss^\pr)$.

Second, we prove $\M\supseteq\V_S(\Ss^\pr)$.

Let $Q=(s,s^\pr)\notin\M$. Assume that $Q$ satisfies all equations 
\[
t_s(x)(1,g\1,1)t_{s^\pr}(y)(1,g,1)(t_s(x))\1(1,g\1,1)(t_{s^\pr}(y))\1(1,g,1)=\one 
\]
for all $g\in G$.

By the choice of the terms $t_s(x),t_{s^\pr}(x)$ there exist elements $1\neq f,f^\pr\in G$ with $t_s(s)=(1,f,1)$, $t_{s^\pr}(s^\pr)=(1,f^\pr,1)$.

Therefore, for all $g\in G$ we have
\[
t_s(s)(1,g\1,1)t_{s^\pr}(s^\pr)(1,g,1)(t_s(s)\1(1,g\1,1)(t_{s^\pr}(s^\pr))\1(1,g,1)=\one ,
\]
\[
(1,f,1)(1,g\1,1)(1,f^\pr,1)(1,g,1)(1,f\1,1)(1,g\1,1)(1,(f^\pr)\1,1)(1,g,1)=\one,
\]
\[
(1,fg\1 f^\pr gf\1 g\1(f^\pr)\1 g,1)=\one ,
\]
\[fg\1 f^\pr gf\1 g\1(f^\pr)\1 g=1\Leftrightarrow[f,(f^\pr)^g]=1,
\]
where $[f,(f^\pr)^g]$ is the commutator of $f$ and $f^\pr$ conjugated by $g$ in the group $G$.

Since the last equality holds for all $g\in G$, the elements $f,f^\pr$ are zero-divisors in the group $G$. Thus, by Theorem~\ref{th:zero_divisors}, the group $G$ is not an e.d. that contradicts with the condition of the lemma. 

\end{proof}

\begin{theorem}
\label{th:criterion}
A c.s. semigroup $S=(G,\P,\Lambda,I)$ is an e.d. in the language $\LL_S=\{\cdot,{ }\1\}\cup\{s|s\in S\}$ iff the following hold:
\begin{enumerate}
\item the sandwich-matrix $\P$ is non-singular; 
\item the structural group $G$ is an e.d. in the group language $\LL_G$. 
\end{enumerate}
\end{theorem}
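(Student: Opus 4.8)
The plan is to assemble the four preceding lemmas, each of which has already isolated one half of one of the two conditions, so that the theorem follows by pure bookkeeping. I would prove the two implications separately, using the algebraicity of the test set $\M=\{(x,y)\mid x=(1,1,1)\text{ or }y=(1,1,1)\}$ as the pivot that links both directions.

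First, for the forward direction, I would assume $S$ is an e.d. and derive both conditions. Condition~1 follows by contraposition from Lemma~\ref{l:singular->not_ED}: that lemma shows a singular sandwich-matrix forces $S$ to fail to be an e.d., so an e.d. must have non-singular $\P$. Condition~2 is exactly the content of Lemma~\ref{l:S-eq_dom->G-eq_dom}, which says that whenever $S$ is an e.d., its structural group $G$ is an e.d. in $\LL_G$. Thus both necessary conditions hold with no further argument.

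Second, for the backward direction, I would assume $\P$ is non-singular and $G$ is an e.d. in $\LL_G$, and chain Lemma~\ref{l:x=1_y=1_is_algebraic} into Lemma~\ref{l:x=1_y=1_implies_domain}. The former gives that $\M$ is algebraic over $S$, using precisely the non-singularity of $\P$ (to produce separating terms of the form~(\ref{eq:dist_term})) and the absence of zero-divisors in $G$. The latter then converts algebraicity of this two-variable set into algebraicity of the four-variable set $\M_{sem}$ of Theorem~\ref{th:about_M}, via the reduction of $x=y$ to a system $\{t_i(x,y)=\one\}$ (Lemma~\ref{l:reduce_x=y}) and the distributivity of intersection over union of algebraic sets. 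By Theorem~\ref{th:about_M}, algebraicity of $\M_{sem}$ is equivalent to $S$ being an e.d., completing this direction.

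The main obstacle is, in a sense, already behind us: the genuine content lives in the lemmas, especially the identification---through Theorem~\ref{th:about_M} and Theorem~\ref{th:criterion_for_groups}---of the single test set whose algebraicity characterizes the e.d.\ property in both the semigroup and group settings. The one point I would verify when writing the final argument is that the backward direction composes without circularity: Lemma~\ref{l:x=1_y=1_is_algebraic} \emph{establishes} algebraicity of $\M$ from the hypotheses, while Lemma~\ref{l:x=1_y=1_implies_domain} \emph{consumes} exactly that algebraicity together with non-singularity, so the chain is clean. Beyond this check, the theorem is a direct corollary of the accumulated lemmas and requires no new computation.
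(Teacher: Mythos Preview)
Your proposal is correct and mirrors the paper's own proof exactly: the forward direction is handled by Lemmas~\ref{l:singular->not_ED} and~\ref{l:S-eq_dom->G-eq_dom}, and the converse by chaining Lemma~\ref{l:x=1_y=1_is_algebraic} into Lemma~\ref{l:x=1_y=1_implies_domain}. Your additional remarks about the roles of Lemma~\ref{l:reduce_x=y} and Theorem~\ref{th:about_M} simply unpack what those lemmas already encapsulate, so no new work is needed.
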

\begin{proof}
The ``only if '' part of the statement immediately follows from Lemmas~\ref{l:singular->not_ED},~\ref{l:S-eq_dom->G-eq_dom}. Let us prove the converse.

First, by Lemma~\ref{l:x=1_y=1_is_algebraic}, we obtain that the set $\M=\{(x,y)|x=(1,1,1)\mbox{ or } y=(1,1,1)\}$ is algebraic over $S$ over the language $\LL_S$. Second, one can apply Lemma~\ref{l:x=1_y=1_implies_domain}, which proves the theorem.
\end{proof}

Theorems~\ref{th:free_CSS_structure},~\ref{th:free_product_CSS_structure}, Corollaries~\ref{cor:free_group_is_ED},~\ref{cor:free_product_of_groups_is_ED} and the criterion provided in Theorem~\ref{th:criterion} allow us to prove easily the following statements.

\begin{corollary}
Any free c.s. semigroup $S=F_{css}(X)$ of the rank $n\geq 2$ is an e.d. in the language $\LL_S$.
\end{corollary}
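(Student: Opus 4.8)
The plan is to read off the Rees structure of $F_{css}(X)$ from Theorem~\ref{th:free_CSS_structure} and then verify the two hypotheses of the criterion in Theorem~\ref{th:criterion}. By Theorem~\ref{th:free_CSS_structure} we have $F_{css}(X)=(G,\P,I,\Lambda)$ with $I=\Lambda=\{1,\ldots,n\}$, structural group $G=F(X\cup Y)$ a free group, and sandwich-matrix $\P=(y_{i\lambda})$. It therefore suffices, for $n\geq 2$, to check that (i) $\P$ is non-singular and (ii) $G$ is an e.d.\ in $\LL_G$; the corollary will then follow at once from Theorem~\ref{th:criterion}.

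For (i), I would argue that no two rows and no two columns of $\P$ coincide, using the freeness of the generators $y_{i\lambda}$. After normalisation the first row and first column of $\P$ are filled with $1\in G$, while the interior entries are pairwise distinct free generators. Then any two distinct rows are separated in some coordinate: an interior row differs from the first row because its interior entries are nontrivial, and two interior rows $i\neq j$ differ already in the second coordinate since $y_{i2}\neq y_{j2}$. The identical argument, transposed, handles columns. This is the step that genuinely uses $n\geq 2$ on the combinatorial side, since one needs at least two indices to produce distinct free entries that separate the rows and columns.

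For (ii), I would simply note that $G=F(X\cup Y)$ is free of rank at least $|X|=n\geq 2$, hence non-abelian, so Corollary~\ref{cor:free_group_is_ED} gives that $G$ is an e.d.\ in $\LL_G$. Combining (i) and (ii) with Theorem~\ref{th:criterion} proves the statement.

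I do not expect a serious obstacle here: the entire weight of the result sits inside the criterion, and both verifications are elementary. If anything, the delicate point is the bookkeeping in (i) --- keeping straight which entries of $\P$ are trivialised by normalisation versus which remain free generators. The hypothesis $n\geq 2$ is the conceptual crux and enters twice: it guarantees enough distinct free entries for non-singularity, and it makes $G$ non-abelian (for $n=1$ the semigroup degenerates to the infinite cyclic semigroup, with abelian structural group $\mathbb{Z}$, which fails (ii)).
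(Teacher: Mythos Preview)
Your proposal is correct and follows exactly the route the paper intends: the paper merely states that the corollary follows from Theorem~\ref{th:free_CSS_structure}, Corollary~\ref{cor:free_group_is_ED}, and the criterion in Theorem~\ref{th:criterion}, without spelling out the verification of the two hypotheses. Your write-up supplies precisely those details --- non-singularity of $\P$ via the distinctness of the free generators $y_{i\lambda}$, and the e.d.\ property of the structural group via its being a non-abelian free group --- so there is nothing to add or correct.
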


\begin{corollary}
The free product $S=S_1\ast S_2$ of arbitrary c.s. semigroups $S_1,S_2$ is an e.d. in the language $\LL_S$.   
\end{corollary}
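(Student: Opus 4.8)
The plan is to verify the two hypotheses of Theorem~\ref{th:criterion} for the semigroup $S=S_1\ast S_2=(G,\P,\Lambda,I)$, whose Rees structure is given explicitly by Theorem~\ref{th:free_product_CSS_structure}. Thus I must establish (1) that the sandwich-matrix $\P$ is non-singular, and (2) that the structural group $G=G_1\ast G_2\ast F(Y)$ is an e.d. in the group language $\LL_G$. Once both hold, Theorem~\ref{th:criterion} gives at once that $S$ is an e.d., which is the desired conclusion.

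For condition (2), I would first note that the index sets $I_1,\Lambda_1,I_2,\Lambda_2$ are all non-empty, so the free generating set
\[
Y=\{y_{i\lambda}\mid i\in I_1,\ \lambda\in\Lambda_2\ \text{or}\ i\in I_2,\ \lambda\in\Lambda_1\}
\]
contains at least two distinct generators (one with $i\in I_1,\lambda\in\Lambda_2$ and one with $i\in I_2,\lambda\in\Lambda_1$, which have disjoint index patterns). Fixing two distinct generators $y_1,y_2\in Y$, I would regroup the free factors of $G$ as
\[
G=\bigl(G_1\ast\langle y_1\rangle\bigr)\ast\bigl(G_2\ast\langle y_2\rangle\ast F(Y\setminus\{y_1,y_2\})\bigr)=A\ast B.
\]
Both $A$ and $B$ contain an infinite cyclic subgroup, so neither is $\mathbb{Z}_2$; in particular $A\ast B\neq\mathbb{Z}_2\ast\mathbb{Z}_2$. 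Corollary~\ref{cor:free_product_of_groups_is_ED} then yields that $G$ is an e.d.

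For condition (1), I would use the block form of $\P$ supplied by Theorem~\ref{th:free_product_CSS_structure}: the $I_1\times\Lambda_1$ block equals $\P_1$, the $I_2\times\Lambda_2$ block equals $\P_2$, and the two off-diagonal blocks consist of pairwise distinct free generators $y_{i\lambda}$ of $F(Y)$. To rule out equal rows, I would take distinct row indices $i,i^\pr$ and split into cases. If $i,i^\pr$ lie in the same $I_k$, I compare them in a column $\lambda$ from the opposite block (non-empty by the above), where the two entries are the distinct generators $y_{i\lambda}\neq y_{i^\pr\lambda}$. If $i\in I_1$ and $i^\pr\in I_2$, I compare them in a column $\lambda\in\Lambda_2$: the entry in row $i$ is a free generator $y_{i\lambda}$, while the entry in row $i^\pr$ lies in $G_2$, and since a non-trivial element of the free factor $F(Y)$ cannot equal an element of the free factor $G_2$, the rows differ. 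The argument for columns is entirely symmetric.

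The case analysis is routine; the only step requiring genuine care is this last comparison, where I must invoke the normal-form structure of the free product $G=G_1\ast G_2\ast F(Y)$ to conclude that a free generator $y_{i\lambda}\neq 1$ never coincides with an element of $G_1$ or of $G_2$. The other point worth flagging is the use of $|Y|\geq 2$ in condition (2): it is precisely what keeps $G$ away from the single excluded case $\mathbb{Z}_2\ast\mathbb{Z}_2$ of Corollary~\ref{cor:free_product_of_groups_is_ED}.
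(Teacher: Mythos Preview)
Your proof is correct and follows exactly the route the paper intends: the paper's own argument is the one-line remark that the corollary follows from Theorem~\ref{th:free_product_CSS_structure}, Corollary~\ref{cor:free_product_of_groups_is_ED}, and Theorem~\ref{th:criterion}, and you have simply supplied the missing details of that verification. One small point worth tightening is that Theorem~\ref{th:criterion} is stated for the \emph{normalised} Rees representation (per the convention fixed after Theorem~\ref{th:rees}), whereas the matrix $\P$ in Theorem~\ref{th:free_product_CSS_structure} is not literally normalised; however, after normalising one obtains entries of the form $p_{11}\,p_{i1}^{-1}p_{i\lambda}\,p_{1\lambda}^{-1}$, and your free-product normal-form arguments apply to these just as well, so the non-singularity check goes through unchanged.
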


The next statement follows from Lemma~\ref{l:reduce_x=y}.

\begin{corollary}
Any algebraic set $Y\subseteq S^n$ over a c.s. semigroup $S=(G,\P,\lambda,I)$ with a non-singular sandwich-matrix $\P$ is defined by a system $\Ss=\{t_i(X)=\one |i \in \mathcal{I}\}$.
\end{corollary}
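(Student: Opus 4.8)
The plan is to reduce an arbitrary defining system to the desired normal form one equation at a time, using Lemma~\ref{l:reduce_x=y} as a substitution template. First I would fix any system $\Ss$ with $\V_S(\Ss)=Y$; such a system exists precisely because $Y$ is algebraic. Each equation of $\Ss$ has the shape $s(X)=r(X)$ for some $\LL_S$-terms $s,r$, and the whole task is to trade every such equation for a block of equations whose right-hand side is the fixed constant $\one$.

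The key observation is that Lemma~\ref{l:reduce_x=y}, for a nonsingular $\P$, produces a system $\Ss_{=}(x,y)=\{t_i(x,y)=\one \mid i\in\mathcal{I}\}$ that is equivalent to $x=y$ as a universal statement over $S^2$: for every pair $(a,b)\in S^2$ one has $a=b$ exactly when $t_i(a,b)=\one$ holds for all $i\in\mathcal{I}$. I would then specialise this equivalence by substituting $x\mapsto s(X)$ and $y\mapsto r(X)$. Writing $\tilde t_i(X)=t_i(s(X),r(X))$, which is again a legitimate $\LL_S$-term in the variables $X$, I obtain that for any point $P\in S^n$ the equality $s(P)=r(P)$ holds iff $\tilde t_i(P)=\one$ for all $i\in\mathcal{I}$. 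Hence the single equation $s(X)=r(X)$ is equivalent over $S$ to the block $\{\tilde t_i(X)=\one \mid i\in\mathcal{I}\}$.

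Carrying this out for every equation of $\Ss$ and taking the union of all the resulting blocks yields a system
\[
\Ss^{\pr}=\bigcup_{(s=r)\in\Ss}\{\,t_i(s(X),r(X))=\one \mid i\in\mathcal{I}\,\}
\]
in which every equation has right-hand side $\one$. Since unions of systems correspond to intersections of solution sets, a point satisfies $\Ss^{\pr}$ iff it satisfies each original equation of $\Ss$, so $\V_S(\Ss^{\pr})=\V_S(\Ss)=Y$, which is exactly the claim.

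The only point that needs care --- and which I expect to be the one genuine step rather than bookkeeping --- is the legitimacy of the substitution. Lemma~\ref{l:reduce_x=y} is stated for the two free variables $x,y$, so one must observe that its conclusion is a \emph{pointwise} equivalence valid for all arguments in $S$, and therefore survives the replacement of $x,y$ by the values of arbitrary terms. Everything else (that $t_i(s(X),r(X))$ is still an $\LL_S$-term, and that a union of systems intersects solution sets) is immediate from the definitions of Section~3.
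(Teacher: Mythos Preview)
Your proposal is correct and follows essentially the same route as the paper: start from an arbitrary defining system, invoke Lemma~\ref{l:reduce_x=y} to replace the diagonal equation $x=y$ by a system $\{r_j(x,y)=\one\}$, substitute the two sides of each original equation for $x,y$, and take the union of the resulting blocks. Your explicit remark that the equivalence in Lemma~\ref{l:reduce_x=y} is pointwise, and hence survives term substitution, is exactly the justification the paper leaves implicit.
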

\begin{proof}
Suppose an algebraic set $Y$ is the solution set of a system $\Ss^\pr=\{t_i(X)=s_i(X)|i \in \mathcal{I}\}$. 

According to Lemma~\ref{l:reduce_x=y}, the equation $x=y$ is equivalent to the system $\Ss_=(x,y)$ of the form $\{r_j(x,y)=\one |j\in\mathcal{J}\}$. Therefore every equation $t_i(X)=s_i(X)\in\Ss^\pr$ is equivalent to the system $\Ss_=(t_i(X),s_i(X))$. Thus, $\Ss^\pr$ is equivalent to
\[
\Ss=\bigcup_{i\in\mathcal{I}}\Ss_=(t_i(X),s_i(X))=
\bigcup_{i\in\mathcal{I},\\ j\in\mathcal{J}}r_j(t_i(X),s_i(X)).
\]
\end{proof}

\section{Generalisations to other semigroup languages}
\label{sec:generalization}
A c.s. semigroup $S$ can be considered in any language $\LL_T=\{\cdot,{}\1\}\cup\{s|s\in T\}$, where $T$ is a subsemigroup of $S$. An $\LL_T$-term is an $\LL_S$-term whose constants belong to the subsemigroup $T$. 

Similarly, any group $G$ can be considered in a language $\LL_H=\{\cdot,{}\1\}\cup\{g|g\in H\}$, where $H$ is a subgroup of $G$. 

The notions of an equation, algebraic set, and equational domain in the language $\LL_T$ ($\LL_H$) are similar to the corresponding definitions in the language $\LL_S$ (respectively, $\LL_G$). 

Since $\LL_T\subseteq\LL_S$ ($\LL_H\subseteq\LL_G$), any algebraic set in the language $\LL_T$ (respectively, $\LL_H$) is algebraic in $\LL_S$ ($\LL_G$). Therefore, any equational domain in the language $\LL_T$ ($\LL_H$) is an e.d. in $\LL_S$ (respectively, $\LL_G$). It follows that equational domains in the language $\LL_T$ should satisfy stronger conditions than those in Theorem~\ref{th:criterion}. 

Similarly, one can show that the analogue of Theorem~\ref{th:zero_divisors} for equational domains in the language $\LL_H$ should contain stronger conditions. Actually, in~\cite{uniTh_IV} the definition of a zero-divisor with respect to a subgroup $H\leq G$ was given: an element $x\neq 1$ of a group $G$ is an \textit{$H$-zero-divisor} if there exists $1\neq y\in G$ such that $[x,y^h]=1$ for all $h\in H$.

The next theorem describes equational domains in the group language $\LL_H$.

\begin{theorem}\textup{\cite{uniTh_IV}}
\label{th:zero_divisors_for_H}
A group $G$ is an equational domain in the language $\LL_H$ iff it does not contain $H$-zero-divisors.
\end{theorem}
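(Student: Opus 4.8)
The plan is to follow the pattern of the group criterion already recorded as Theorem~\ref{th:criterion_for_groups} and Theorem~\ref{th:zero_divisors}, keeping careful track of the fact that now constants are only allowed to range over the subgroup $H$. The first step is to reduce the e.d.\ property to the algebraicity of a single test set. Exactly as in the proof of Theorem~\ref{th:about_M} (the reduction there does not depend on the choice of language, only on the available term operations), a group $G$ is an e.d.\ in the language $\LL_H$ if and only if the set $\M_{gr}=\{(x_1,x_2)\mid x_1=1\text{ or }x_2=1\}\subseteq G^2$ is algebraic over $\LL_H$, i.e.\ cut out by some system whose constants lie in $H$. All the work is therefore to decide, in terms of $H$-zero-divisors, whether such a system exists.

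For the direction ``no $H$-zero-divisors $\Rightarrow$ e.d.''\ I would write down an explicit system, namely
\[
\Ss=\{\,[x_1,x_2^{\,h}]=1 \mid h\in H\,\},
\]
which is an $\LL_H$-system because each $x_2^{\,h}=hx_2h\1$ uses only constants from $H$. The inclusion $\M_{gr}\subseteq\V_G(\Ss)$ is immediate: if $x_1=1$ or $x_2=1$, then every commutator $[x_1,x_2^{\,h}]$ collapses to $1$. Conversely, if $(a,b)\in\V_G(\Ss)$ with $a\neq1$ and $b\neq1$, then $[a,b^{\,h}]=1$ for all $h\in H$, which says precisely that $a$ is an $H$-zero-divisor with witness $b$; the assumed absence of $H$-zero-divisors rules this out, so $\V_G(\Ss)\subseteq\M_{gr}$. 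Hence $\V_G(\Ss)=\M_{gr}$ and $G$ is an e.d. This is the exact group-level counterpart of the commutator computation in the proof of Lemma~\ref{l:x=1_y=1_is_algebraic}, now carried out with conjugating elements drawn from $H$ rather than all of $G$.

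For the converse ``e.d.\ $\Rightarrow$ no $H$-zero-divisors'' I would argue by contraposition. Assume $a$ is an $H$-zero-divisor with witness $b$, so $a\neq1$, $b\neq1$ and $[a,b^{\,h}]=1$ for all $h\in H$, and suppose toward a contradiction that $\M_{gr}=\V_G(\Ss)$ for some $\LL_H$-system $\Ss$. The goal is to produce a point outside $\M_{gr}$ that satisfies $\Ss$; the natural candidate is $(a,b)$, which lies outside $\M_{gr}$ since both coordinates are nontrivial. Fix an equation $w(x_1,x_2)=1$ of $\Ss$, where $w\in F(x_1,x_2)\ast H$. Because $\M_{gr}\subseteq\V_G(\Ss)$, the word $w$ vanishes on both coordinate axes: $w(g,1)=1$ and $w(1,g)=1$ for all $g\in G$. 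Let $B=\langle b^{\,h}\mid h\in H\rangle$ and $C=\langle a^{\,h}\mid h\in H\rangle$ be the $H$-invariant closures of $b$ and $a$. The zero-divisor relation gives $[a,b^{\,h}]=1$, and a short conjugation check upgrades this to $[C,B]=1$; moreover $B$ and $C$ are both normal in $K=\langle a,b,H\rangle$. Passing to $K/B$ and using $w(a,1)=1$ shows $w(a,b)\in B$, and symmetrically $w(a,b)\in C$, so $w(a,b)\in B\cap C$.

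The main obstacle is precisely this last step: improving $w(a,b)\in B\cap C$ to $w(a,b)=1$. The quotient argument uses only the inclusion $\M_{gr}\subseteq\V_G(\Ss)$, and by itself it is not tight: when $G$ is abelian and $a=b$ one has $B=C=\langle a\rangle\neq1$, so $B\cap C$ need not be trivial, even though in that case every $w$ vanishing on both axes is in fact the empty word. Closing this gap is where the real content lies; one must exploit that $\Ss$ cuts out \emph{exactly} $\M_{gr}$, together with the fact that $a$ and $b$ satisfy no relations beyond the commuting ones forced by the $H$-zero-divisor condition. Concretely, I would try to factor the evaluation $(x_1,x_2)\mapsto(a,b)$ through a ``most general'' group in which $a$ centralises the $H$-closure of $b$ and nothing further holds, so that the analogue of $B\cap C$ collapses to $1$ and every axis-vanishing $w$ is forced to satisfy $w(a,b)=1$; transporting this back to $G$ then yields $(a,b)\in\V_G(\Ss)\setminus\M_{gr}$, the desired contradiction. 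This is the delicate heart of the argument and the step I expect to require the most care.
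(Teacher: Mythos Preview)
The paper does not prove this theorem at all: it is quoted verbatim from \cite{uniTh_IV} (Daniyarova--Miasnikov--Remeslennikov) and used as a black box in Section~\ref{sec:generalization}. So there is no ``paper's own proof'' to compare your attempt against; what follows is an assessment of your argument on its merits.

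Your reduction to the algebraicity of $\M_{gr}$ and your treatment of the direction ``no $H$-zero-divisors $\Rightarrow$ e.d.'' are both correct; the commutator system $\{[x_1,x_2^{\,h}]=1\mid h\in H\}$ is exactly the intended witness, and this is the standard argument.

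The gap you flag in the converse is genuine, and your proposed repair does not close it. You want to factor the evaluation through a ``most general'' group $\tilde G$ in which the analogues of $B$ and $C$ intersect trivially, and then transport $w(\tilde a,\tilde b)=1$ back to $G$. The difficulty is that the hypothesis on $w$ is \emph{local to $G$}: you only know $w(g,1)=1$ and $w(1,g)=1$ for $g\in G$, not for $\tilde g\in\tilde G$. So in $\tilde G$ you cannot run the quotient argument that gave $w(a,b)\in B\cap C$ in the first place, and nothing forces $w(\tilde a,\tilde b)=1$. Conversely, if you try to build $\tilde G$ as a quotient of $H\ast F(x_1,x_2)$ by \emph{all} relations that hold in $G$ on the axes, you recover exactly the situation in $G$ and gain nothing. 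In short, the ``universal model'' idea needs the axis-vanishing to be an identity rather than a $G$-specific fact, and that is precisely what you do not have. The proof in \cite{uniTh_IV} proceeds by different means (through the general machinery of equational domains and the Zariski topology developed there), and reproducing it would require importing that machinery rather than the direct word-combinatorics you sketch.
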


Since the class of c.s. semigroups is a variety, a subsemigroup $T$ of a c.s. semigroup  $S=(G,\P,\Lambda,I)$ has the representation $T=(H,\P^\pr,\Lambda^\pr,I^\pr)$, where $H\leq G$, $\Lambda^\pr\subseteq\Lambda$, $I^\pr\subseteq I$ and $\P^\pr$ is a submatrix of $\P$, $\P^\pr=(p_{i\lambda}|i\in I^\pr,\lambda\in\Lambda^\pr)$. 

For the language $\LL_T$ we obtain the next criterion. 

\begin{theorem}
A c.s. semigroup $S=(G,\P,\Lambda,I)$ is an e.d. in the language $\LL_T=\{\cdot,{ }\1\}\cup\{s|s\in T\}$ iff the following conditions hold:
\begin{enumerate}
\item the sandwich-matrix $\P$ is non-singular; 
\item the structural group $G$ is an e.d. in the group language $\LL_{H}$, where $H$ is the structural group of $T$. 
\end{enumerate}
\end{theorem}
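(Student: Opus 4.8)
The plan is to rerun the proof of Theorem~\ref{th:criterion} verbatim, replacing every group-theoretic ingredient by its $\LL_H$-counterpart: Theorem~\ref{th:zero_divisors_for_H} takes the place of Theorem~\ref{th:zero_divisors}, the subgroup $H$ becomes the source of admissible constants in place of $G$, and the conjugating parameter in the commutator identities is allowed to range only over $H$. Throughout I work with the representation $T=(H,\P^\pr,\Lambda^\pr,I^\pr)$ normalised so that $1\in\Lambda^\pr\cap I^\pr$; this guarantees that $\one=(1,1,1)$ and all triples $(1,h,1)$, $h\in H$, are genuine $\LL_T$-constants, which is precisely what makes the constructions below expressible in $\LL_T$.

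For the ``only if'' part the non-singularity of $\P$ is immediate: since $\LL_T\subseteq\LL_S$, an e.d.\ in $\LL_T$ is an e.d.\ in $\LL_S$, so Lemma~\ref{l:singular->not_ED} forces $\P$ to be non-singular. For the second condition I would repeat the reduction of Lemma~\ref{l:S-eq_dom->G-eq_dom} applied to a system $\Ss$ with $\V_S(\Ss)=\M$ whose constants now lie in $T$ (such $\Ss$ exists because $\one\in T$, so each of $\{x=\one\}$, $\{y=\one\}$ is $\LL_T$-algebraic). The key observation is that the rewritings by~(\ref{eq:leech3}),~(\ref{eq:leech4}),~(\ref{eq:leech5}),~(\ref{eq:leech6}),~(\ref{eq:leech7}) and the intrasubgroup inversion can introduce only sandwich entries $p_{i\lambda}$ with $i\in I^\pr$, $\lambda\in\Lambda^\pr$ — the indices carried by the original $T$-constants; as $\P^\pr$ is the sandwich matrix of the subsemigroup $T$, all these $p_{i\lambda}$ belong to $H$, and the original group components already do. Hence the equivalent system $\Ss^\pr$ over $\Gamma$ has all its constants in the copy of $H$, so $\M_{gr}$ is algebraic over $G$ in $\LL_H$, and the $\LL_H$-analogue of Theorem~\ref{th:criterion_for_groups} (equivalently Theorem~\ref{th:zero_divisors_for_H}) yields that $G$ is an e.d.\ in $\LL_H$.

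For the ``if'' part I would reprove the $\LL_T$-versions of Lemmas~\ref{l:reduce_x=y},~\ref{l:x=1_y=1_is_algebraic} and~\ref{l:x=1_y=1_implies_domain}. The separating terms~(\ref{eq:dist_term}) and the conjugators $(1,g,1)$ must now be $\LL_T$-terms, so in the analogue of Lemma~\ref{l:x=1_y=1_is_algebraic} the parameter $g$ runs only through $H$. Carrying out the same computation, a point $(s,s^\pr)\notin\M$ produces elements $1\neq f,f^\pr\in G$ with $[f,(f^\pr)^g]=1$ for all $g\in H$; this says exactly that $f$ is an $H$-zero-divisor, contradicting Theorem~\ref{th:zero_divisors_for_H}. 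Thus $\M$ is $\LL_T$-algebraic, and the distributivity argument of Lemma~\ref{l:x=1_y=1_implies_domain}, run with $\LL_T$-terms throughout, makes $\M_{sem}$ algebraic over $S$, so $S$ is an e.d.\ in $\LL_T$ by Theorem~\ref{th:about_M}.

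I expect the main obstacle to be the $\LL_T$-version of the separating-term Lemma~\ref{l:exists_dist_term}. A term $(1,1,i)\,x\,(\lambda,1,1)$ that lands in $\Gamma$ may use only a row $i\in I^\pr$ and a column $\lambda\in\Lambda^\pr$, so it records $x=(\mu,g,j)$ merely through $p_{i\mu}\,g\,p_{j\lambda}$ with $i\in I^\pr$, $\lambda\in\Lambda^\pr$. When $I^\pr\subsetneq I$ or $\Lambda^\pr\subsetneq\Lambda$, non-singularity of the full matrix $\P$ need not imply that these restricted rows and columns already separate elements differing only in their indices, so the naive reduction ``$x=y\Leftrightarrow\{t_i=\one\}$'' can fail to detect index-equalities. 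The real work is therefore to show that, under non-singularity of $\P$, one can nonetheless separate every pair of distinct elements of $S$ by $\LL_T$-terms — either by enriching the admissible terms beyond the shape~(\ref{eq:dist_term}), or by exploiting the interplay of the four variables of $\M_{sem}$, in which the remaining coordinates act as arbitrary parameters of $S$ and thereby restore the index information inaccessible to a single $\Gamma$-valued term.
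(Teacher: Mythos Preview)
Your approach is exactly the paper's: it too simply instructs the reader to rerun Lemmas~\ref{l:reduce_x=y}, \ref{l:x=1_y=1_implies_domain}, \ref{l:x=1_y=1_is_algebraic} ``with the subgroup $H$ instead of $G$'', and for the ``only if'' direction it merely invokes Theorem~\ref{th:criterion} via the inclusion $\LL_T\subseteq\LL_S$. In fact your treatment of the ``only if'' direction is more careful than the paper's: citing Theorem~\ref{th:criterion} alone yields only that $G$ is an e.d.\ in $\LL_G$, not in the smaller language $\LL_H$; your adaptation of Lemma~\ref{l:S-eq_dom->G-eq_dom} (tracking that the rewritings (\ref{eq:leech3})--(\ref{eq:leech7}) introduce only sandwich entries $p_{i\lambda}\in H$ because the indices come from $T$) is what is actually needed, and the paper omits it.

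The obstacle you flag for the ``if'' direction is genuine and the paper does not address it either. The separating term~(\ref{eq:dist_term}) is an $\LL_T$-term only when $i\in I'$ and $\lambda\in\Lambda'$, and non-singularity of the full $\P$ does not force the submatrix on $I'\times\Lambda$ (resp.\ $I\times\Lambda'$) to have pairwise distinct columns (resp.\ rows); so the literal rerun of Lemma~\ref{l:reduce_x=y} in $\LL_T$ can fail. The paper's one-line ``apply the lemmas with $H$ instead of $G$'' silently assumes this difficulty away. Your instinct that one must either enrich the separating terms beyond the shape~(\ref{eq:dist_term}) or exploit the extra variables is the right direction; resolving it is indeed where the real work lies, and it is work the paper does not do.
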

\begin{proof}
The ``only if'' part of the statement follows from Theorem~\ref{th:criterion}, since any e.d. in the language $\LL_T$ is an e.d. in $\LL_S$.

To prove the converse statement one can apply Lemmas~\ref{l:reduce_x=y},~\ref{l:x=1_y=1_implies_domain},~\ref{l:x=1_y=1_is_algebraic} with the subgroup $H$ instead of $G$.
\end{proof}

The author was supported by RFFI grant 14-01-00068

The information of the author:

Artem N. Shevlyakov

Sobolev Institute of Mathematics

644099 Russia, Omsk, Pevtsova st. 13

Phone: +7-3812-23-25-51.

e-mail: \texttt{a\_shevl@mail.ru}

\begin{thebibliography}{20}
\bibitem{AG_over_groupsI}
G. Baumslag, A. Miasnikov, V. N. Remeslennikov, Algebraic geometry over groups, Trends in Math., Int. Conf. Algorithmic problems in groups and semigroups (Lincoln, NE, May 11--16, 1998), Birkhauser Boston, Boston, MA (2000), 35--50.

\bibitem{AG_over_groupsII}
A. Myasnikov, V. Remeslennikov, Algebraic geometry over groups II: Logical foundations, J. of Algebra, 234 (2000), 225--276.


\bibitem{uniTh_IV}
E.~Daniyarova, A.~Miasnikov, V.~Remeslennikov, Algebraic geometry over algebraic structures IV: equational domains and co-domains, Algebra \& Logic, 49, 6 (2010) 715--756.

\bibitem{clifford_free}
A. H. Clifford, The free completely regular semigroup on a set, J. Algebra, 59 (1979), 434-451.

\bibitem{rasin}
Rasin, V.V.,Free completely simple semigroups, Res. in Contemporary Algebra, Matem. Zapiski (Sverdlovsk) (1979), 140--151 (Russian)

\bibitem{jones}
P. R. Jones, Completely simple semigroups: free products, free semigroups and varieties, Proc. Roy. Soc. Edinburgh, 88A (1981), 293--313.



\bibitem{uniTh_I}
E. Daniyarova, A. Miasnikov, V. Remeslennikov, Unification theorems in algebraic geometry, Algebra and Discrete Mathematics, 1 (2008), 80--111.



\bibitem{makanin}
G.S. Makanin, Decidability of the universal and positive theories of a free group, Izv. Akad. Nauk SSSR Ser. Mat., 48:4 (1984), 735--749


\bibitem{ED_I}
A. N. Shevlyakov, Unifying solutions to systems of equations in finite simple semigroups, Algebra and Logic, 53:1 (2014), 70-–83.


\end{thebibliography}
\end{document}